\theoremstyle{plain}
\newtheorem{theorem}{Theorem}[section]
\newtheorem{proposition}[theorem]{Proposition}
\newtheorem{lemma}[theorem]{Lemma}
	 \newtheorem{question}[subsection]{Question}
	  \newtheorem{assumption}[subsection]{Assumption}
\theoremstyle{definition}
\newtheorem{definition}[theorem]{Definition}
\newtheorem{remark}[theorem]{Remark}
\newtheorem{example}[theorem]{Example}
\newcommand\R{\mathbb{R}}
\newcommand\Z{\mathbb{Z}}
\newcommand\N{\mathbb{N}}
\newcommand\C{\mathbb{C}}
\newcommand\eps{\varepsilon}
\begin{document}

\begin{frontmatter}[classification=text]

\title{Sum-avoiding sets in  groups} 

\author[tt]{Terence Tao\thanks{TT is supported by NSF grant DMS-0649473 and by a Simons Investigator Award.}}
\author[vv]{Van Vu\thanks{VV is supported by research grants DMS-0901216 and AFOSAR-FA-9550-09-1-0167.}}

\begin{abstract}
Let $A$ be a finite subset of an arbitrary additive group $G$, and let $\phi(A)$ denote the cardinality of the largest subset $B$ in $A$ that is sum-avoiding in $A$ (that is to say, $b_1+b_2 \not \in A$ for all distinct $b_1,b_2 \in B$).  The question of controlling the size of $A$ in terms of $\phi(A)$ in the case when $G$ was torsion-free was posed by Erd\H{o}s and Moser.  When $G$ has torsion, $A$ can be arbitrarily large for fixed $\phi(A)$ due to the presence of subgroups.  Nevertheless, we provide a qualitative answer to an analogue of the Erd\H{o}s-Moser problem in this setting, by establishing a structure theorem, which roughly speaking asserts that $A$ is either efficiently covered by $\phi(A)$ finite subgroups of $G$, or by fewer than $\phi(A)$ finite subgroups of $G$ together with a residual set of bounded cardinality.  In order to avoid a large number of nested inductive arguments, our proof uses the language of nonstandard analysis.  

We also answer negatively a question of Erd\H{o}s regarding large subsets $A$ of finite additive groups $G$ with $\phi(A)$ bounded, but give a positive result when $|G|$ is not divisible by small primes.
\end{abstract}
\end{frontmatter}

\section{Introduction}

Let $G = (G,+)$ be an additive group (i.e., an abelian group with group operation denoted by $+$).  For $x \in G$, we write $2x := x+x$, $3x := x+x+x$, etc..  If $A,B$ are two subsets of $G$, we say (following \cite{Ruzsa}) that $B$ is \emph{sum-avoiding in $A$} if there do not exist distinct $b_1,b_2 \in B$ such that $b_1+b_2 \in A$.  We then define
\begin{equation}\label{phi-def}
 \phi(A) := \sup \{ |B|: B \subseteq A; B \hbox{ finite and sum-avoiding in } A \}
\end{equation}
where $|B|$ denotes the cardinality of a finite set $B$;
equivalently, $\phi(A)$ is the smallest integer with the property that, given any $\phi(A)+1$ elements of $A$, two of them will also sum to an element of $A$.  In particular $1 \leq \phi(A) \leq |A|$ for non-empty finite $A$.  Informally, sets $A$ with a small value of $\phi(A)$ are ``somewhat sum-closed'' in the sense that sums $a+a'$ of distinct elements of $A$ ``often'' remain in $A$.

To estimate $\phi(A)$ in terms of $|A|$ when $A$ is finite non-empty is an old problem posed by Erd\H{o}s and Moser \cite{erd1965}.  In the case where $G$ is a torsion-free\footnote{A group is \emph{torsion-free} if one has $nx \neq 0$ for any $x \in G \backslash \{0\}$ and any $n=1,2,3,\ldots$.  For instance, a lattice such as $\Z^d$ is torsion-free.} group (without loss of generality one can take $G$ to be the integers $\Z$ in this case), a simple application of Tur\'an's theorem (for graphs with variable degrees) gives 
\begin{equation}\label{phi-log}
\phi(A) \geq \log |A| - O(1)
\end{equation}
for finite non-empty $A$ (see \cite{choi}, as well as an elementary proof in \cite{Ruzsa}).  This bound has been slowly improved in recent years \cite{ssv}, \cite{JD}, \cite{shao}, with the best known bound currently being
$$
\phi(A) \geq \log |A| (\log\log |A|)^{1/2-o(1)}
$$
(see \cite[Corollary 1.3]{shao}).  Equivalently, one has $|A| \leq \exp( \phi(A) / \log^{1/2-o(1)} \phi(A) )$.  In the converse direction, there are examples of arbitrarily large finite sets of integers $A$ for which $\phi(A) = \exp( O( \sqrt{ \log |A| } ) )$; see \cite{Ruzsa}.
It remains a challenging question to reduce the gap between the upper and lower bounds.  See our survey \cite{tv-survey} for more discussion.

In this paper we consider the case when the group $G$ is allowed to have torsion.  Then it is possible for $\phi(A)$ to remain small even when $A$ is arbitrarily large.  For instance, we have the following classification of the additive sets $A$ for which $\phi(A)=1$:

\begin{proposition}[Characterisation of $\phi(A)=1$]\label{easy} Let $A$ be a finite subset of an additive group $G$.  Then $\phi(A)=1$ if and only if one of the following is true:
\begin{itemize}
\item $A=H$, where $H \leq G$ is a subgroup of $G$.
\item $A=H \backslash \{0\}$, where $H \leq G$ is a $2$-torsion subgroup of $G$ (thus $2x=0$ for all $x \in H$).
\item $A = \{b\}$ for some $b \in G$.
\item $A = \{b,0\}$ for some $b \in G$.
\item $A = \{b,0,-b\}$ for some $b \in G$.
\end{itemize}
\end{proposition}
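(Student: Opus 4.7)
My plan is to prove sufficiency by a routine direct check (for each listed form, any two distinct $b_1,b_2\in A$ satisfy $b_1+b_2\in A$) and to prove the converse by a case analysis on $|A|$. I would dispatch $|A|\leq 3$ directly: for example, when $|A|=3$ and $0\in A$, writing $A=\{0,a,b\}$ forces $a+b\in A\setminus\{a,b\}=\{0\}$, giving $A=\{0,a,-a\}$; when $|A|=3$ and $0\notin A$, the condition applied to each pair of $A=\{a,b,c\}$ forces $a+b=c$, $a+c=b$, $b+c=a$ and hence $2a=2b=2c=0$, so $A=H\setminus\{0\}$ for the Klein four-group $H=\{0,a,b,c\}$.

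The substantive case is $|A|\geq 4$, which I plan to handle by first reducing to $0\in A$ via the observation that $\tilde A := A\cup\{0\}$ still satisfies $\phi(\tilde A)=1$. If I can show $\tilde A$ is a subgroup $H$, then either $A=H$, or $A=H\setminus\{0\}$, in which case for distinct $a,-a\in A$ the constraint $0=a+(-a)\notin A$ forces $a=-a$, so every element of $H$ is $2$-torsion. Thus the crux reduces to showing that a finite $\tilde A\ni 0$ with $|\tilde A|\geq 4$ and $\phi(\tilde A)=1$ is a subgroup.

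I would establish this in two steps. First, \emph{symmetry} $-\tilde A=\tilde A$: supposing $-a\notin\tilde A$ for some nonzero $a\in\tilde A$, note $a$ is not $2$-torsion (else $-a=a$), so the order $d$ of $a$ satisfies $d\geq 3$. The hypothesis $\phi(\tilde A)=1$ together with $-a\notin\tilde A$ makes translation by $a$ a bijection of $\tilde A\setminus\{0,a\}$, so this set is a union of $\langle a\rangle$-cosets; a short orbit argument then forces $\tilde A\cap\langle a\rangle=\{0,a\}$. Picking any $b\in\tilde A\setminus\{0,a\}$ gives $b+\langle a\rangle\subseteq\tilde A$, and summing distinct pairs inside this coset (which surject onto $2b+\langle a\rangle$ because $d\geq 3$) forces $2b+\langle a\rangle\subseteq\tilde A$; iterating, $kb+\langle a\rangle\subseteq\tilde A$ for all $k\geq 1$, and either some $kb\in\langle a\rangle$ (forcing $\langle a\rangle\subseteq\tilde A$, contradicting $\tilde A\cap\langle a\rangle=\{0,a\}$) or infinitely many distinct cosets appear in the finite set $\tilde A$. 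Second, \emph{closure under doubling}: for $a\in\tilde A$ with $2a\neq 0$, I use $|\tilde A|\geq 4$ together with symmetry to pick $b\in\tilde A\setminus\{0,a,-a\}$; then a three-step chain yields $a+b\in\tilde A$, next $2a+b=a+(a+b)\in\tilde A$ (using $a+b\neq a$), and finally $2a=(2a+b)+(-b)\in\tilde A$ (using $2a+b\neq -b$, i.e.\ $b\neq -a$). Combined with $0\in\tilde A$ and closure under distinct-pair sums, this shows $\tilde A$ is a subgroup.

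The hardest part is the symmetry step: the coset iteration rests on the dichotomy ``some $kb$ lands in $\langle a\rangle$'' versus ``infinitely many new cosets appear in $\tilde A$,'' and the input $d\geq 3$ is essential to ensure distinct pairs within a $\langle a\rangle$-coset realize every element of the shifted coset $2b+\langle a\rangle$.
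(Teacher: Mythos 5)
Your overall route is sound and genuinely different from the paper's. The paper splits on whether some nonzero $a\in A$ has $2a\in A$: if so, translation by $a$ is a bijection of $A$, which quickly yields that $A$ is closed under addition and hence a subgroup; if not, it splits further on whether $A$ contains an element of order greater than $2$, landing either in the small exceptional sets inside $\langle b\rangle$ or in the $2$-torsion case. You instead split on $|A|$, adjoin $0$, and prove directly that $\tilde A=A\cup\{0\}$ is a subgroup via symmetry and closure under doubling; your symmetry step (the coset-iteration dichotomy) plays the role of the paper's observation that $a+kb\in A$ for all $k$ forces finite order, but in a more elaborate form. I checked the symmetry step carefully and it holds up: the orbit argument does force $a$ to have finite order $d\ge 3$, the set $\tilde A\setminus\{0,a\}$ is a disjoint union of $\langle a\rangle$-cosets, and the iteration $kb+\langle a\rangle\subseteq\tilde A$ goes through (distinctness of summands at each stage reduces to $jb\notin\langle a\rangle$ for $j$ strictly below the order of $b$ modulo $\langle a\rangle$, with the passage from $k=1$ to $k=2$ covered by $d\ge 3$). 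What your approach buys is a uniform ``$\tilde A$ is a group'' statement with the exceptional sets isolated in the trivial $|A|\le 3$ analysis; the paper's dichotomy is shorter but interleaves the exceptional cases with the group case.

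There is, however, one genuine error in the doubling step. You justify $2a=(2a+b)+(-b)\in\tilde A$ by asserting that the distinctness condition $2a+b\neq -b$ is equivalent to $b\neq -a$. It is not: $2a+b=-b$ means $2(a+b)=0$, which can hold with $b\neq -a$ whenever $G$ has $2$-torsion (for instance $G=\Z/2\times\Z/4$, $a=(0,1)$, $b=(1,1)$ gives $2a+b=(1,3)=-b$). Since you take $b$ to be an arbitrary element of $\tilde A\setminus\{0,a,-a\}$, the chain can break for such $b$, and nothing you have established rules out that every admissible $b$ is of this bad form. The gap is easily repaired: alternatively write $2a=(a+b)+(a+(-b))$, whose summands lie in $\tilde A$ by the symmetry step (note $-b\neq a$ since $b\neq -a$) and are distinct unless $2b=0$. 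Since $2b=0$ and $2(a+b)=0$ together would force $2a=0$, contrary to hypothesis, at least one of the two decompositions of $2a$ succeeds for every admissible $b$. With that patch the proof is complete.
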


\begin{proof} It is easy to verify that $\phi(A)=1$ in all of the above five cases.  Now suppose that $\phi(A)=1$; then we have
\begin{equation}\label{sum}
b_1 + b_2 \in A \hbox{ whenever } b_1, b_2 \in A \hbox{ and } b_1 \neq b_2
\end{equation}

Suppose first that there exists a non-zero $a \in A$ such that $2a \in A$.  Then from \eqref{sum} we see that the injective map $x \mapsto x+a$ maps $A$ to $A$, and thus must be a bijection on $A$.  This implies that the map $x \mapsto x-a$ is also a bijection on $A$.  Combining these facts with \eqref{sum}, we see that $A$ is closed under addition (since we can shift $b_1, b_2, b_1+b_2$ back and forth by $a$ as necessary to make $b_1, b_2$ distinct).  Since $A$ is finite, every element must have finite order, and then $A$ is then closed under negation, and so $A$ is a subgroup.

It remains to consider the case when 
\begin{equation}\label{2aa}
2a \not \in A \hbox{ for every non-zero } a \in A.
\end{equation}
Suppose now that there exists an element $b \in A$ such that $2b \neq 0$.  We claim that $A$ must then lie in the group generated by $b$.  For if this were not the case, then take an element $a \in A$ which is not generated by $b$, in particular $a \neq 0$.  By iterating \eqref{sum} we see that $a+kb \in A$ for all positive $k$, thus $b$ must have finite order.  In particular, $a+b, a-b \in A$, and by \eqref{sum} again (and the hypothesis $2b \neq 0$) we see that $2a \in A$, contradicting \eqref{2aa}.  Once $A$ lies in the group generated by $b$, it is not hard to see that $A$ must be one of $\{b\}$, $\{b,0\}$, or $\{b,0,-b\}$, simply by using the observation from \eqref{sum} that the map $x \mapsto x+b$ maps $A \backslash \{b\}$ into $A$, together with \eqref{2aa}.

The only remaining case is the $2$-torsion case when $2b=0$ for all $b \in A$.  Then either $A = \{0\}$, or else by \eqref{2aa} $A$ does not contain zero.  In the latter case we observe from \eqref{sum} that $A \cup \{0\}$ is closed under addition and is thus a $2$-torsion group.  The claim follows.
\end{proof}

Now we consider the case when $\phi(A)$ is a little larger than $1$, say $\phi(A) \leq k$ for some moderately sized $k$.
From Proposition \ref{easy} and the easily verified subadditivity property
\begin{equation}\label{subadd}
 \phi(A \cup B) \leq \phi(A) + \phi(B)
\end{equation}
we see that this situation can occur for instance if $A$ is the union of up to $k$ arbitrary finite subgroups, or the union of up to $k-1$ arbitrary subgroups and a singleton.
The main result of this paper is to establish a partial converse to this observation, covering $A$ efficiently by up to $k$ groups, or up to $k-1$ groups and a set of bounded cardinality:

\begin{theorem}[Small $\phi$ implies covering by groups]\label{main} Let $A$ be a finite subset of an additive group $G$ with $\phi(A) \leq k$ for some $k \geq 1$.  Then there exist finite subgroups $H_1,\ldots,H_{m}$ of $G$ with $0 \leq m \leq k$
 such that 
\begin{equation}\label{amm}
|A \backslash (H_1 \cup \ldots \cup H_{m})| \leq C(k)
\end{equation}
and 
\begin{equation}\label{amm-2}
|A \cap H_i| \geq |H_i|/C(k)
\end{equation}
for all $1 \leq i \leq m$.  Here $C(k) > 0$ is a quantity  depending only on $k$ (in particular, it does not depend on $G$ or $|A|$).  If furthermore $m=k$, we may strengthen \eqref{amm} to
$$ A \subseteq H_1 \cup \ldots \cup H_k.$$
\end{theorem}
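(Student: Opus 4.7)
The plan is to argue by induction on $k$. The base case $k=1$ is precisely the content of Proposition \ref{easy}: each of its five cases supplies either $m=0$ with $|A| \leq 3$, or $m=1$ with a finite subgroup $H_1 \leq G$ satisfying $|A \setminus H_1| \leq 1$ and $|A \cap H_1| \geq |H_1|/2$, so that $C(1) = 3$ works throughout.

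For the inductive step, let $B = \{b_1,\ldots,b_\ell\}$ be a maximum sum-avoiding subset of $A$, so $\ell = \phi(A) \leq k$; without loss of generality $\ell = k$ (otherwise apply the inductive hypothesis directly). Maximality of $B$ forces $B \cup \{a\}$ to fail sum-avoidance for every $a \in A \setminus B$, and since $B$ is itself sum-avoiding this produces some $i$ with $b_i \neq a$ and $a + b_i \in A$. Hence
$$
A \setminus B \;\subseteq\; \bigcup_{i=1}^{k} S_i, \qquad S_i := \{a \in A \setminus \{b_i\} : a + b_i \in A\},
$$
and pigeonhole yields some $i^\ast$ with $|S_{i^\ast}| \gtrsim |A|/k$. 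The first substantive move is to promote this ``approximate translation-invariance of $A$ by $b_{i^\ast}$'' into a genuine finite subgroup $H \leq G$ for which $A \cap H$ satisfies \eqref{amm-2}. In the finitary setting this is a Freiman/Bogolyubov-type step whose quantitative losses would cascade through each level of the induction, which is precisely the nested-induction problem the paper explicitly wishes to avoid; in the nonstandard setting, working in an ultraproduct lets one reinterpret the non-infinitesimal overlap relation ``$|A \cap (A+g)|/|A|$ not infinitesimal'' as a genuine subgroup of a standard quotient, and then pull $H$ back to $G$.

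Having extracted $H$, I would complete the induction by applying the hypothesis to $A' := A \setminus H$ and adjoining $H$ to the resulting covering. The main obstacle is the reduction $\phi(A') \leq k-1$, since subadditivity \eqref{subadd} is too weak and removing elements can in principle raise $\phi$. The argument must proceed by contradiction: given a sum-avoiding $B' \subseteq A'$ with $|B'| = k$, one produces an element $h \in A \cap H$ such that $B' \cup \{h\}$ is sum-avoiding in $A$, contradicting $\phi(A) \leq k$. The delicacy is that sums $b'_i + b'_j$ landing in $A \cap H$ (rather than in $A'$) are invisible to sum-avoidance in $A'$ but visible in $A$; one must use the density \eqref{amm-2} of $A \cap H$ in $H$ to choose $h$ generically enough to avoid both kinds of obstruction.

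Finally, the strengthened conclusion $A \subseteq H_1 \cup \cdots \cup H_k$ when $m = k$ follows from the same augmentation principle: any element $a$ outside $H_1 \cup \cdots \cup H_k$, together with suitably generic representatives from each $A \cap H_i$, would yield a sum-avoiding $(k+1)$-subset of $A$, forcing $\phi(A) \geq k+1$. Transferring the whole argument back from the ultraproduct via the standard compactness correspondence then produces the claimed constants $C(k)$, albeit ineffectively.
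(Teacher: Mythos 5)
Your inductive claim $\phi(A \backslash H) \le k-1$ is exactly the naive reduction that the paper's introduction singles out as the central obstacle, and a concrete counterexample shows it fails: take a finite group $H$, an element $x \in G \backslash H$ with $2x \in H$, and let $A = H \cup A''$ where $A'' \subseteq x + H$ is \emph{arbitrary}. Then $\phi(A) \le 2$ (of any three elements of $A$, two lie in $H$ or two lie in $x+H$, and in either case their sum is in $H \subseteq A$), yet $A \backslash H = A''$ is entirely sum-avoiding in itself, since every pairwise sum lands in $2x+H = H$, which is disjoint from $x+H \supseteq A''$; thus $\phi(A \backslash H) = |A''|$ is unbounded. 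Your proposed repair --- append a generic $h \in A \cap H$ to a sum-avoiding $k$-subset $B' \subseteq A \backslash H$ and contradict $\phi(A) \le k$ --- cannot work here: for $b'_1, b'_2 \in B' \subseteq A''$ the sum $b'_1 + b'_2$ already lies in $H \subseteq A$, so $B'$ is not sum-avoiding in $A$ \emph{before} adjoining $h$, and no choice of $h$, however generic and however dense $A \cap H$ is in $H$, can undo an obstruction that does not involve $h$. The correct remedy is to enlarge $H$ to the commensurate group $H + \{0,x\}$ absorbing the offending coset; the bulk of the paper's proof (Propositions \ref{fc} and \ref{fc-complex}) is an elaborate iterative procedure for performing such enlargements simultaneously for a family of groups, carrying along pairwise-transversality hypotheses. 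Correspondingly the paper's induction (Theorem \ref{main-2}) is on the number $m$ of groups extracted rather than on $k$, and the inductive statement is genuinely stronger than Theorem \ref{main}; a straight induction on $k$ with the hypothesis $\phi(A)\le k$ does not close.

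There is also a gap upstream of this. A maximal sum-avoiding $B=\{b_1,\dots,b_k\}$ and a pigeonholed index $i^\ast$ with $|\{a \in A : a+b_{i^\ast} \in A\}| \gg |A|/k$ give only a \emph{single} direction of approximate translation invariance, which is not a Balog--Szemer\'edi/Freiman hypothesis (a long interval in $\Z$ has $|A \cap (A+1)| = |A|-1$ with no approximate group structure at all). The paper's Lemma \ref{pigeon} instead produces $\gg |A'|^2$ pairs $(a_1,a_2)\in A'\times A'$ with $a_1+a_2\in A'$, hence after Cauchy--Schwarz large additive energy, and only then invokes Balog--Szemer\'edi and Freiman to obtain a coset progression $H+P$; a further argument using the wandering-set Lemma \ref{tf} is then required to show the torsion-free rank of $P$ must be zero. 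Packaging all of this as a single ``reinterpretation in the ultraproduct'' leaves the substantive work unaddressed.
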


Note that Proposition \ref{easy} gives the $k=1$ case of this theorem with $C(1)=3$.

Intuitively, the reason for Theorem \ref{main} is as follows.  If $\phi(A)$ is equal to some small natural number $k$ and $A$ is large, then we expect many pairs $a,a'$ in $A$ to sum to another element in $A$.  Standard tools in additive combinatorics, such as the Balog-Szemer\'edi theorem \cite{balog} and Freiman's theorem in an arbitrary abelian group \cite{gr-4}, then should show that $A$ contains a large component that is approximately a \emph{coset progression} $H+P$: the Minkowski sum of a finite group $H$ and a multidimensional arithmetic progression $P$.  Because of bounds such as \eqref{phi-log} that show that $\phi$ becomes large on large torsion-free sets, one expects to be able to eliminate the role of the ``torsion-free'' component $P$ of the coset progression $H+P$, to conclude that $A$ has large intersection with a finite subgroup $H$.  In view of the subadditivity \eqref{subadd}, one heuristically expects $\phi(A)$ to drop from $k$ to $k-1$ after removing $H$ (that is to say, one expects $\phi(A \backslash H) = k-1$), at which point one can conclude by induction on $k$ starting with Proposition \ref{easy} as a base case.  More realistically, one expects to have to replace the conclusion $\phi(A \backslash H) = k-1$ with some more technical conclusion that is not exactly of the same form as the hypothesis $\phi(A)=k$, which makes a direct induction on $k$ difficult; instead, one should expect to have to perform a $k$-fold iteration argument in which one removes up to $k$ subgroups $H_1,\dots,H_m$ from $A$ in turn until one is left with a small residual set $A \backslash (H_1 \cup \dots \cup H_m)$. 

Unfortunately, when the group $G$ contains a lot of torsion, removing a large subgroup $H$ from $A$ can leave one with a residual set with no good additive structure, and in particular with no bounds whatsoever on $\phi(A \backslash H)$.  For instance, suppose that there is an element $x$ of $G \backslash H$ with $2x \in H$, and take $A$ to be the union of $H$ and an \emph{arbitrary} subset of $x+H$.  Then it is easy to see that $\phi(A)$ is at most $2$, but upon removing the large finite group $H$ from $A$ one is left with an arbitrary subset of $x+H$, and in particular $\phi(A \backslash H)$ can be arbitrarily large.

The problem in this example is that the group $H$ is the ``incorrect'' group to try to remove from $A$; one should instead remove the larger group $H' := H + \{0,x\}$, which contains $H$ as an index two subgroup.  The main difficulty in the argument is then to find an algorithm to enlarge an ``incorrect'' group $H$ to a ``correct'' group that absorbs all the relevant ``torsion'' that is present.  This is not too difficult at the start of the iterative argument mentioned above, but becomes remarkably complicated in the middle of the iteration when one has already removed some number of large subgroups $H_1,\dots,H_{m'}$ from the initial set $A$.  A particular technical difficulty comes from the fact that the groups $H_1,\dots,H_{m'}$, as well as the residual set $A \backslash (H_1 \cup \dots \cup H_{m'})$, can have wildly different sizes; in particular, sets which are negligible when compared against one of the $H_i$, could be extremely large when compared against the residual set $A \backslash (H_1 \cup \dots \cup H_{m'})$.  To get around these issues, one needs to ensure some ``transversality'' between these components of $A$, in the sense that the intersection between any of these two sets (or translates thereof) are much smaller than either of the two sets.  This adds an extra layer of complexity to the iterative argument; so much so, in fact, that it becomes very unwieldy to run the argument in a purely finitary fashion.  Instead, we were forced to formulate the argument in the language of nonstandard analysis\footnote{For some prior uses of nonstandard analysis in additive combinatorics, see e.g. \cite{jin} or \cite{gtz}.} in order to avoid a large number of iterative arguments to manage a large hierarchy of parameters (somewhat comparable in complexity to those used to prove the hypergraph regularity lemma, see e.g. \cite{gowers-hypergraph}, \cite{nagle-rodl-schacht}, \cite{nagle-rodl-schacht1}, \cite{tao:hyper}).  One byproduct of this is that our arguments currently provide no bound whatsoever on the quantity $C(k)$ appearing in the above theorem; indeed we expect that if one were to translate the nonstandard analysis arguments back to a standard finitary setting, that the bound obtained on $C(k)$ would be of Ackermann type in $k$ or worse.

In addition to the issue of lack of bounds for $C(k)$, Theorem \ref{main} remains unsatisfactory in another respect, in that it does not fully describe the structure of $A$ inside each of the component groups $H_i$, other than to establish positive density in the sense of \eqref{amm-2}.  On the one hand one does not expect as simple a description of the sets $A \cap H_i$ as in Proposition \ref{easy}, as was already seen in the preceding example when $A$ was the union of a finite group $H$ and an arbitrary subset of a coset $x+H$ with $2x \in H$.  On the other hand, not every dense subset of a finite group has a small value of $\phi$.  For instance, in $\Z/N\Z$, the set $A := \{N/3, \ldots, 2N/3\}$ has $\phi(A) = N/3 + O(1)$, even though $A$ is contained in a finite group of order $O(|A|)$.  See also Proposition \ref{counter}, Proposition \ref{seven}, and Proposition \ref{sidon} below for examples of a non-trivial dense subset of a cyclic group with a small value of $\phi$.    Nevertheless, we are able to sharpen the classification in Theorem \ref{main} when $G$ is a finite group whose order is not divisible by small primes; see Theorem \ref{odd} below.

\subsection{A question of Erd\H{o}s}

In \cite{erd1965}, Erd\H{o}s posed the following question, written here in our notation.

\begin{question}\label{quest}  Let $k$ be a natural number, let $G$ be a finite additive group, and let $A$ be a subset of $G$ with $\phi(A) < k$.  Assume that $|A|$ is sufficiently large depending on $k$.  Does there necessarily exist $a_1,a_2 \in A$ such that $a_1+a_2=0$?
\end{question}

From Proposition \ref{easy} we see that the answer to the question is affirmative when $k=2$.  The case $k=3$ was settled in \cite{ls} (who in fact established the claim for all $|A| \geq 48$).  To our knowledge the cases $k \geq 4$ have remained open in the literature, though in \cite{baltz} a probabilistic construction was given, for any $n$ of subsets $A$ of a finite group of $n$ elements with $\phi(A) = O( \log^2 n )$ such that there were no $a_1,a_2 \in A$ with $a_1+a_2=0$.  


For $k \geq 5$, we have the following simple negative result:

\begin{proposition}[Counterexample]\label{counter}  Let $n \ge 4$ be a natural number, and set $G$ to be the cyclic group $G := \Z/2^n \Z$.  Let $A \subset G$ be the set
$$ A := \{ (4m+1)2^j \hbox{ mod } 2^n: m \in \Z, 0 \leq j \leq n-2 \}.$$
Thus, for instance, if $n=4$, then $A = \{ 1, 2, 4, 5, 9, 10, 13 \hbox{ mod } 16 \}$. 
Then $\phi(A) = 4$ and $|A| = 2^{n-1}-1$, but there does not exist $a_1,a_2 \in A$ with $a_1+a_2=0$.
\end{proposition}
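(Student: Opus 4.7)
The plan is to stratify $A$ by the $2$-adic valuation $v_2$, analyze sums via this stratification, and thereby deduce all three assertions.

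Write $A_j := \{a \in A : v_2(a) = j\}$ for $0 \leq j \leq n-2$. Since $(4m+1)\cdot 2^j$ has $v_2$ equal to $j$, and after dividing by $2^j$ runs over the positive integers $\equiv 1 \pmod{4}$ below $2^{n-j}$, one obtains $|A_j| = 2^{n-j-2}$, and the geometric series sums to $|A| = 2^{n-1}-1$. For the no-zero-sum claim, take $a_i = (4m_i+1)\,2^{j_i}$ with $j_1 \leq j_2$: a direct expansion shows that $v_2(a_1+a_2)$ equals $j_1$ when $j_1 < j_2$ (the bracket $(4m_1+1) + (4m_2+1) 2^{j_2-j_1}$ is odd) and equals $j_1+1$ when $j_1=j_2$ (since $(4m_1+1)+(4m_2+1) = 4(m_1+m_2)+2$). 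Both are at most $n-1$, so $a_1+a_2 \not\equiv 0 \pmod{2^n}$.

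The crux is to refine this expansion into the following characterization of when $a_1+a_2 \in A$: either $j_1 < j_2$ with $j_2 \geq j_1+2$ (the case $j_2=j_1+1$ forces the quotient to be $\equiv 3 \pmod{4}$, ejecting the sum from $A$), or $j_1 = j_2 = j$ with $j \leq n-3$ and $m_1 \equiv m_2 \pmod{2}$ (so that $2(m_1+m_2)+1 \equiv 1 \pmod{4}$). From this description, any sum-avoiding $B \subseteq A$ must satisfy: the set of $j$-values appearing in $B$ is contained in $\{j, j+1\}$ for some $j$ (otherwise two $j$-values differ by at least $2$, violating sum-avoidance via the first case), and $B \cap A_j$ contains at most one element of each parity of $m$ when $j \leq n-3$, while $|A_{n-2}|=1$. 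A short case split then bounds $|B|$ by $4$, with equality only when $B$ draws one even-$m$ and one odd-$m$ element from each of $A_j$ and $A_{j+1}$ for some $j \leq n-4$ (possible exactly because $n \geq 4$).

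For the matching lower bound, the explicit set $\{1, 5, 2, 10\}$ lies in $A_0 \cup A_1$ with both parities of $m$ represented in each stratum, and its six pairwise sums $3, 6, 7, 11, 12, 15$ each take the form $2^i(4k+3)$, so they lie outside $A$; this gives $\phi(A) \geq 4$. The only step I expect to require any care is the characterization of when $a_1+a_2 \in A$ in terms of $(j_1, j_2, m_1 \bmod 2, m_2 \bmod 2)$; once that is in hand, the upper bound on $\phi(A)$ follows from an elementary combinatorial count, and the remaining claims reduce to the $v_2$ computations above.
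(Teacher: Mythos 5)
Your proof is correct and follows essentially the same route as the paper: both stratify by the $2$-adic valuation $j$, exploit the parity of $m$ to characterize when $a_1+a_2 \in A$, and conclude via a pigeonhole/counting argument that a sum-avoiding set has $j$-values confined to $\{j,j+1\}$ with at most two elements (one per parity of $m$) in each stratum. The only cosmetic difference is that the paper pigeonholes five elements directly into the four classes $(j \bmod 2\text{-window}, m \bmod 2)$, whereas you make the characterization of $a_1+a_2\in A$ fully explicit first; the substance is the same.
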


\begin{proof}  It is easy to see that if $a \in A$, then $-a \not \in A$, and that
$$ |A| = 2^{N-2} + 2^{N-3} + \dots + 1 = 2^{n-1}-1$$
as claimed, and the set $\{ 1, 2, 5, 10 \hbox{ mod } 2^n \}$ is always sum-avoiding in $A$, so $\phi(A) \geq 4$.  The only remaining thing to establish is the upper bound $\phi(A) \leq 4$.  Suppose for contradiction that there existed distinct $a_1,a_2,a_3,a_4,a_5 \in A$ such that the $\binom{5}{2}$ sums $a_i+a_{i'}$ with $1 \leq i < i' \leq 5$ were all outside $A$.  We can write $a_i = (4m_i+1)2^{j_i} \hbox{ mod } 2^n$ with $j_1 \leq \dots \leq j_5$.  If $j_5 > j_1+1$ then $a_5$ is a multiple of $4 \times 2^{j_1}$, and hence $a_1+a_5 = (4m_1+1)2^{j_1}+a_5$ lies in $A$, a contradiction.  Thus $j_1,\dots,j_5$ lie in $\{j_1,j_1+1\}$.  By the pigeonhole principle, we can then find $1 \leq i < i' \leq 5$ such that $j_i = j_{i'}$ and such that $m_i, m_{i'}$ have the same parity.  Note that $j_i=j_{i'}$ cannot equal $n-2$ since $a_i,a_{i'}$ would then both equal $2^{n-2} \hbox{ mod } 2^n$.  But then $a_i+a_{i'}$ is of the form $(4m+1)2^{j_i+1}$, where $m$ is the average of $m_i$ and $m_{i'}$, and so $a_i+a_{i'} \in A$, again a contradiction.
\end{proof}

In particular, the answer to Erd\H{o}s's question is negative for any $k \geq 5$.  For $k=4$, or more generally for any $k \geq 4$ for which $2^{k-1}-1$ is a Mersenne prime, we have an even simpler counterexample:

\begin{proposition}[Mersenne prime counterexample]\label{seven}  Let $k \geq 4$ be such that $p := 2^{k-1}-1$ is prime.  Let $G := (\Z/p\Z) \times H$ for some arbitrary finite group $H$, and let $A := A_0\times H$ where $A_0 := \{ 2^i \hbox{ mod } p: i =0,\dots,k-2\}$.  Then $\phi(A)=k-1$ and $|A| = (k-1)|H|$, but there does not exist $a_1,a_2 \in A$ with $a_1+a_2=0$.
\end{proposition}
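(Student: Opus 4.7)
The plan is to verify three things separately: (i) $|A| = (k-1)|H|$, (ii) no two elements of $A$ sum to zero, and (iii) the matching bounds $\phi(A) \geq k-1$ and $\phi(A) \leq k-1$. The organizing observation behind all of these is that $A_0$ is precisely the cyclic multiplicative subgroup of $(\Z/p\Z)^\times$ generated by $2$; this controls both its size and its closure properties.

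For (i), the multiplicative order of $2$ modulo $p$ divides $k-1$, and since $p = 2^{k-1}-1$ is a Mersenne prime the exponent $k-1$ is itself prime. As $p > 2$ the order is not $1$, hence it equals $k-1$, so the listed powers are distinct and $|A| = (k-1)|H|$. For (ii), a zero sum would give $2^i + 2^j \equiv 0 \pmod p$ for some $i,j \in \{0,\dots,k-2\}$, equivalently $2^{|j-i|} \equiv -1 \pmod p$. Squaring shows $k-1$ divides $2|j-i|$; since $k-1$ is an odd prime (here I would use $k \geq 4$, so $k-1 \geq 3$), it already divides $|j-i|$, contradicting $2^{|j-i|} \equiv 1 \not\equiv -1$.

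For the lower bound $\phi(A) \geq k-1$, I would exhibit the candidate $B := \{(2^i, h_0) : 0 \leq i \leq k-2\}$ for some fixed $h_0 \in H$. For $0 \leq i < j \leq k-2$, the integer $2^i + 2^j$ is bounded above by $3 \cdot 2^{k-3}$, which is strictly less than $p = 2^{k-1}-1$ for $k \geq 4$, so the reduction mod $p$ coincides with $2^i + 2^j$ as an integer. The latter has two distinct bits set in binary and is therefore not a power of $2$, hence not in $A_0$. So $B$ is sum-avoiding in $A$ and has $k-1$ elements.

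The upper bound is where the only genuinely nontrivial step lies. The key point is that $A_0$, being a multiplicative subgroup, is closed under doubling: $2 \cdot 2^i = 2^{i+1}$ lies again in $A_0$, with $2^{k-2}$ wrapping to $2^0 = 1$ modulo $p$. Granted this closure property, pigeonhole finishes the argument: any $B \subseteq A$ of cardinality $k$ projects into a set $A_0$ of size $k-1$ on the first coordinate, so two distinct elements $(x,h), (x,h') \in B$ must share a first coordinate, and distinctness forces $h \neq h'$; their sum $(2x, h+h')$ then lies in $A_0 \times H = A$. The main obstacle is thus purely conceptual — recognizing $A_0$ as a multiplicative subgroup — after which the rest is a bounded-binary-digit estimate plus pigeonhole.
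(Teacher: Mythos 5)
Your proof is correct and uses the same key idea as the paper for the only nontrivial step: for the upper bound $\phi(A) \leq k-1$, pigeonhole on the $\Z/p\Z$-coordinate of any $k$ elements of $A$, then use that $A_0$ is closed under doubling (since $2^{k-1} \equiv 1 \pmod p$) to produce a sum landing back in $A$. You flesh out the cardinality computation, the absence of zero sums, and the lower bound in more detail than the paper (which dismisses them as routine), but the substance and route are the same.
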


\begin{proof}  The only non-trivial claim is that $\phi(A)=k-1$.  From computing sums from the set $A_0 \times \{0\}$ we see that $\phi(A) \geq k-1$.  Suppose for contradiction that there existed distinct $a_1,\dots,a_k \in A$ with all sums $a_i+a_j$ outside $A$.  By the pigeonhole principle we can find $1 \leq i < j \leq k$ such that $a_i, a_j \in \{a\} \times H$ for some $a \in A_0$.  Then $a_i+a_j \in \{2a\} \times H$.  Since $2a$ is also in $A_0$, we obtain a contradiction.
\end{proof}

Thus Erd\H{o}s's question is now resolved for all values of $k$.  But the above examples heavily on the order of the group $G$ being highly divisible by a small prime ($2$ and small Mersenne primes respectively).  In the opposite case, where the order of $G$ is not divisible by any small primes, we have a positive result, which strengthens Theorem \ref{main} by greatly improving the density of $A$ in each subgroup $H_i$:

\begin{theorem}[The case of no small prime divisors]\label{odd}  Let $k$ be a natural number, let $\eps > 0$, and assume that $C_0$ is sufficiently large depending on $k,\eps$.  Let $A$ be a subset of a finite additive group $G$ with $\phi(A) < k$.  Assume that $|G|$ is not divisible by any prime less than $C_0$.  Then there exist subgroups $H_1,\dots,H_m$ of $G$ with $0 \leq m < k$ such that
$$ |A \cap H_i| > (1-\eps) |H_i|$$
for all $i=1,\dots,m$, and
$$ |A \backslash (H_1 \cup \dots \cup H_m)| \leq C_0.$$
Furthermore, if $m=k-1$, then we can take $A \backslash (H_1 \cup \dots \cup H_m)$ to be empty.
\end{theorem}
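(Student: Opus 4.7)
The plan is to deduce Theorem~\ref{odd} from Theorem~\ref{main} via a density-boosting argument inside each subgroup of the rough decomposition, combined with a separate sharpness argument for the tight $m = k-1$ case; both steps crucially exploit the no-small-prime-divisors hypothesis.

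Since $\phi(A) \leq k-1$, Theorem~\ref{main} applied with parameter $k-1$ yields subgroups $H_1',\dots,H_{m'}'$ of $G$ with $m' \leq k-1$, $|A \cap H_i'| \geq |H_i'|/C(k-1)$, residual $|A \setminus (H_1' \cup \dots \cup H_{m'}')| \leq C(k-1)$, and $A \subseteq \bigcup_i H_i'$ when $m' = k-1$. Each $|H_i'|$ divides $|G|$ and hence is coprime to every prime below $C_0$. To boost density inside a single $H_i'$, set $A' := A \cap H_i'$; any subset sum-avoiding in $A'$ is sum-avoiding in $A$, so $\phi(A') \leq k-1$, and a random-sampling argument yields $\phi(A') \geq |A'|^2/(4E)$ where $E$ counts ordered pairs $(a,a') \in A' \times A'$ with $a \neq a'$ and $a+a' \in A'$; combined with $|A'| \gtrsim_k |H_i'|$ this gives $E \gtrsim_k |A'|^2$. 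Balog--Szemer\'edi--Gowers then extracts $A'' \subseteq A'$ of size $\gtrsim_k |A'|$ with $|A''+A''| \lesssim_k |A''|$, and Freiman's theorem in abelian groups~\cite{gr-4} embeds $A''$ in a coset progression $K + P \subseteq H_i'$ of rank $O_k(1)$ and volume $O_k(|A''|)$. The no-small-prime hypothesis now forces $P$ to collapse into $K$: each step of $P$ has order at least $C_0$ in $H_i'/K$, yet the progression length along that step is $O_k(1)$, so for $C_0$ sufficiently large in $k$ the only way $|K+P|$ can remain $O_k(|K|)$ is for $P \subseteq K$. Hence $A''$ lies in a strictly smaller subgroup $\tilde H \leq H_i'$, and we iterate the procedure inside $\tilde H$; subgroup orders strictly decrease, so after $O_k(1)$ rounds we arrive at a subgroup $H_i \leq H_i'$ with $|A \cap H_i| > (1-\eps)|H_i|$. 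The $O_{k,\eps}(1)$ elements lost during these refinements are absorbed into the residual, which stays $\leq C_0$ for $C_0$ large enough.

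For the final sharpness step, first prune the list so that no $H_l$ is contained in any other $H_i$; this can only decrease $m$ and loses no coverage of $A$. If pruning drops $m$ strictly below $k-1$, we are done with residual $\leq C_0$. Otherwise $m = k-1$ with no nesting, and we assume for contradiction there exists $a \in A \setminus \bigcup_i H_i$; we shall build a sum-avoiding set $\{a_1,\dots,a_{k-1},a\} \subseteq A$ of size $k$, violating $\phi(A) < k$. Greedily select $a_i \in A \cap H_i$ so that $a_i \notin \bigcup_{l \neq i} H_l$, every sum $a_i + a_j$ with $j < i$ lies outside $A$, and $a_i + a$ lies outside $A$. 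Each forbidden set is of the form $(A - y) \cap H_i$ for some $y \in H_j$ or $y = a$ with $y \notin H_i$; since $A \subseteq \bigcup_l H_l$, this is contained in $\bigcup_l H_i \cap (H_l - y)$, each piece being either empty or a coset of $H_i \cap H_l$ of size $|H_i \cap H_l|$. The no-nesting condition gives $[H_i : H_i \cap H_l] \mid [G : H_l]$, so this index is either $1$ or at least $C_0$, yielding $|H_i \cap H_l| \leq |H_i|/C_0$. A union bound over the $O(k^2)$ constraints leaves at most $O(k^2)|H_i|/C_0$ bad choices, well below $|A \cap H_i| > (1-\eps)|H_i|$ once $C_0 \gg k^2/(1-\eps)$; the greedy construction then succeeds, producing the desired contradiction. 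The most delicate step is the density-boosting: one must ensure that the Freiman coset-progression structure actually collapses to a genuine subgroup rather than to a shorter coset progression of positive rank, which requires carefully balancing the Freiman rank-and-length parameters against the no-small-prime threshold $C_0$.
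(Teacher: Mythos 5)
Your proposal does not follow the paper's proof, and the central step—the ``density-boosting'' inside each $H_i'$—contains a genuine gap that I do not see how to repair along the lines you suggest.

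The flaw is in the claim that the no-small-prime hypothesis collapses the Freiman coset progression $K+P$ to a group and thereby upgrades the density to $1-\eps$. First, Freiman's theorem \cite{gr-4} bounds the \emph{rank} of $P$ by $O_k(1)$, not the side-lengths $N_1,\dots,N_r$: those can be as large as $|A''|/|K|$, so ``the progression length along that step is $O_k(1)$'' is simply false, and there is no size obstruction preventing a long one-dimensional progression along a generator of huge order. Second, and more importantly, even if one could somehow force $P\subseteq K$, the conclusion would only be that $A''$ lies in a subgroup $K$ with $|K|=O_k(|A''|)$, i.e.\ density $\gg_k 1$—exactly the bound Theorem~\ref{main} already gives, not the $1-\eps$ you need. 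Third, the proposed iteration is vacuous: any proper subgroup of $H_i'$ has index at least $C_0$ (all prime divisors of $|H_i'|$ are $\geq C_0$), while $|A''|\gg_k |H_i'|$, so for $C_0$ large a subgroup containing $A''$ must equal $H_i'$ and the ``orders strictly decrease'' step never fires. Finally, shrinking the subgroups $H_i'\rightsquigarrow H_i$ would destroy the coverage $A\subseteq\bigcup_i H_i'$ needed for the $m=k-1$ sharpness claim, whereas the paper never replaces the subgroups at all.

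The paper's actual route is quite different. It applies Theorem~\ref{main} once, keeps those subgroups, and shows \emph{by contradiction} that each density $|A\cap H_i|/|H_i|$ must exceed $1-\eps$ once $C_0$ is large. The contradiction comes from the arithmetic removal lemma of Kr\'al--Serra--Vena applied to the system $x_i+x_j=x_{ij}$ with $x_i\in A\cap H$, $x_{ij}\in H\setminus A$: since $\phi(A\cap H)<k$ there are almost no solutions, so the removal lemma (plus oddness of $|H|$) forces $2a\in A\cap H$ for all but $O(\eps_1|H|)$ elements $a\in A\cap H$; iterating this doubling relation on the Fourier side and using that the frequencies $2^j\xi$ stay distinct for many $j$ (again using the no-small-prime hypothesis and Plancherel) yields Fourier uniformity of $1_{A\cap H}$; a complexity-one counting argument then produces $\gg_{k,\eps}|H|^k$ solutions, contradicting the paucity of solutions. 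None of this machinery—removal lemma, Fourier uniformity, complexity-one counting—appears in your proposal, and the BSG/Freiman route you substitute is not strong enough to reach density $1-\eps$. Your greedy construction for the $m=k-1$ endgame is reasonable in spirit, but in the paper that case requires no separate argument because the subgroups are unchanged from Theorem~\ref{main}, where the coverage statement is already built in.
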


Applying this theorem with $\eps = 1/2$, we conclude in particular that if $|G|$ is not divisible by any prime less than $C_0$ and $|A| > C_0$, then $A$ must have density greater than $1/2$ in some subgroup $H$ of $G$, and hence there exist $a_1,a_2 \in A \cap H$ with $a_1+a_2 = 0$.  Thus the answer to Erd\H{o}s's question becomes positive if we assume that $|G|$ is not divisible by small primes.

We establish Theorem \ref{odd} in Section \ref{odd-sec}; the argument uses Theorem \ref{main}, as well as a general arithmetic removal lemma of Kr\'al, Serra, and Vena \cite{vena} (which is in turn proven using the hypergraph removal lemma), and also some Fourier analysis.  From Proposition \ref{seven} we see (assuming the existence of infinitely many Mersenne primes!) that the quantity $C_0$ in the above theorem has to at least exponentially growing in $k$.

One could hope to get even more control on the exceptional set $H_i \backslash A$, in the spirit of Proposition \ref{easy}.  But the following example shows that the exceptional set can be somewhat non-trivial in size:

\begin{proposition}\label{sidon}  Let $H$ be a finite group of odd order, and let $B \subset H$ be a Sidon set, that is to say a set $B = \{b_1,\dots,b_m\}$ whose sums $b_i+b_j$, $1 \leq i < j \leq m$, are all distinct.  Then $\phi(H \backslash B) < 4$.
\end{proposition}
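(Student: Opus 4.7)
The plan is to argue by contradiction. Suppose there exist four distinct elements $a_1, a_2, a_3, a_4 \in H \setminus B$ such that all six pairwise sums $s_{ij} := a_i + a_j$ with $1 \le i < j \le 4$ lie in $B$. The starting observation is that the three partitions of $\{1,2,3,4\}$ into two pairs yield
\begin{equation*}
s_{12} + s_{34} \;=\; s_{13} + s_{24} \;=\; s_{14} + s_{23} \;=\; a_1+a_2+a_3+a_4,
\end{equation*}
so the three unordered pairs $P_1 := \{s_{12}, s_{34}\}$, $P_2 := \{s_{13}, s_{24}\}$, $P_3 := \{s_{14}, s_{23}\}$ of elements of $B$ all share this common sum.

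The key step is to use the odd-order hypothesis on $H$ to rule out multiple degeneracies among the $P_\alpha$. For instance, if $s_{12} = s_{34}$ and $s_{13} = s_{24}$ were to hold simultaneously, then adding the two identities and cancelling $a_2 + a_3$ would give $2a_1 = 2a_4$; since $|H|$ is odd, the doubling map $x \mapsto 2x$ is a bijection on $H$, forcing $a_1 = a_4$, a contradiction. The two remaining pairs of simultaneous coincidences are ruled out by symmetric arguments. Hence at least two of $P_1, P_2, P_3$ consist of two \emph{distinct} elements of $B$.

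Finally, I would verify that any two of the $P_\alpha$ are themselves distinct as $2$-element subsets: for example, if $P_2 = P_3$, then either $s_{13} = s_{14}$ (forcing $a_3 = a_4$) or $s_{13} = s_{23}$ (forcing $a_1 = a_2$), both impossible; the remaining cases are analogous. Combined with the previous paragraph, this produces two distinct $2$-element subsets of $B$ with a common sum, directly contradicting the Sidon property of $B$, and completing the proof. The only nontrivial ingredient is the odd-order hypothesis, used to preclude simultaneous pairing coincidences; the rest of the argument is bookkeeping on the three pairings.
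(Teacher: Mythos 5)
Your proposal is correct and follows essentially the same route as the paper: both arguments note that the three pairings of $\{a_1,a_2,a_3,a_4\}$ give pairs of elements of $B$ with a common sum, use the odd order of $H$ to rule out two simultaneous coincidences, and then derive two distinct pairs in $B$ with equal sums, contradicting the Sidon property. Your explicit check that the $P_\alpha$ are distinct as $2$-element subsets is a minor presentational difference, not a different method.
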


It was shown by Erd\H{o}s and Tur\'an \cite{et}, Erd\"os \cite{erdos}, and Chowla \cite{chowla} that there are Sidon subsets of $\{1,\dots,N\}$ of size $(1+o(1)) \sqrt{N}$ for large $N$, which easily implies the existence of Sidon sets in the cyclic group $\Z/N\Z$ of size $\gg \sqrt{N}$.  This shows that the exceptional sets $H_i \backslash A$ in Theorem \ref{odd} can be arbitrarily large.

\begin{proof}  Suppose for contradiction that there exist distinct $a_1,a_2,a_3,a_4 \in A$ whose six sums $a_i+a_j$, $1 \leq i < j \leq 4$ lie outside of $A$, and thus lie in $B$.  Of these six sums, many pairs of sums are forced to be distinct from each other, for instance $a_1+a_2$ is distinct from $a_1+a_3$.  Indeed, the only collisions possible are $a_1+a_2=a_3+a_4$, $a_1+a_3=a_2+a_4$, and $a_2+a_3=a_1+a_4$.  If for instance $a_1+a_2=a_3+a_4$, $a_1+a_3=a_2+a_4$ are both true then $2(a_2-a_3)=0$, which implies $a_2=a_3$ since $|H|$ is odd, a contradiction.  Thus there is at most one collision; by symmetry we may assume then that $a_1+a_2 \neq a_3+a_4$ and $a_1+a_3 \neq a_2+a_4$, which implies that $a_1+a_2,a_3+a_4, a_1+a_3, a_2+a_4$ are all distinct from each other.  But then from the identity
$$ (a_1+a_2) + (a_3+a_4) = (a_1+a_3) + (a_2+a_4)$$
we see that $B$ is not a Sidon set, a contradiction.
\end{proof}

We thank Ben Green for helpful comments and references.
  
\section{Nonstandard analysis formulation}

We quickly review the nonstandard analysis formalism that we need.  We assume the existence of a standard \emph{universe} ${\mathfrak U}$, which is a set that contains all the mathematical objects needed for Theorem \ref{main} such as the group $G$, the set $A$, the natural numbers $\N = \{1,2,\dots\}$, the integers $\Z$ and reals $\R$, etc..  The precise nature of ${\mathfrak U}$ is not important for us, so long as it remains a set. Objects in this universe will be referred to as \emph{standard}, thus for instance elements of $\N$ will be called \emph{standard natural numbers} for emphasis.

We fix a \emph{non-principal ultrafilter} $\alpha \in \beta \N \backslash \N$: a collection of subsets of $\N$ (known as \emph{$\alpha$-large} sets) obeying the following axioms:
\begin{itemize}
\item If $A \subset B \subset \N$ and $A$ is $\alpha$-large, then $B$ is $\alpha$-large.
\item The intersection of any two $\alpha$-large sets is again $\alpha$-large.
\item If $A \subset \N$, then exactly one of $A$ and $\N \backslash A$ is $\alpha$-large.
\item No finite subset of $\N$ is $\alpha$-large.
\end{itemize}
The existence of a non-principal ultrafilter is easily established by Zorn's lemma.

A \emph{nonstandard object} is an equivalence class of a sequence $(x_{\mathfrak n})_{{\mathfrak n} \in \N}$ of standard objects ${\mathfrak n}$, where two sequences $(x_{\mathfrak n})_{{\mathfrak n} \in \N}$ and $(y_{\mathfrak n})_{{\mathfrak n} \in \N}$ are equivalent if one has $x_{\mathfrak n} = y_{\mathfrak n}$ for an $\alpha$-large set of ${\mathfrak n}$.  This equivalence class will be denoted $\lim_{{\mathfrak n} \to \alpha} x_{\mathfrak n}$, and called the \emph{ultralimit} of the sequence $x_{\mathfrak n}$.  We define a \emph{nonstandard natural number} to be the ultralimit of a sequence of standard natural numbers; similarly define the concept of a nonstandard integer, nonstandard real, etc..  The set of nonstandard natural numbers is denoted ${}^* \N$; similarly define ${}^* \Z$, ${}^* \R$, etc..  We can embed $\N$ in ${}^* \N$ by identifying any standard natural number $n$ with its nonstandard counterpart $\lim_{{\mathfrak n} \to \alpha} n$; similarly for the integers and reals.

All the usual arithmetic operations and relations on the standard natural numbers $\N$ extend to nonstandard natural numbers.  For instance the sum of two nonstandard numbers is given by the formula
$$ \lim_{{\mathfrak n} \to \alpha} x_{\mathfrak n} + \lim_{{\mathfrak n} \to \alpha} y_{\mathfrak n} := \lim_{{\mathfrak n} \to \alpha} (x_{\mathfrak n} + y_{\mathfrak n}),$$
and the ordering on nonstandard natural numbers is given by defining
$$ \lim_{{\mathfrak n} \to \alpha} x_{\mathfrak n} < \lim_{{\mathfrak n} \to \alpha} y_{\mathfrak n}$$
whenever $x_{\mathfrak n} < y_{\mathfrak n}$ for an $\alpha$-large set of ${\mathfrak n}$.  One can verify that all the usual (first-order) laws of arithmetic continue to hold for the nonstandard natural numbers; this is a special case of {\L}os's theorem, which we will not state here.  Similarly for the nonstandard integers and reals.

Given a sequence $X_1,X_2,\dots$ of standard sets $X_{\mathfrak n}$, we define the \emph{ultraproduct} $\prod_{{\mathfrak n} \to \alpha} X_{\mathfrak n}$ to be the collection of ultralimits $\lim_{{\mathfrak n} \to \alpha} x_{\mathfrak n}$ with $x_{\mathfrak n} \in X_{\mathfrak n}$ for an $\alpha$-large set of ${\mathfrak n}$.  An ultraproduct of standard sets will be called a \emph{nonstandard set}\footnote{Nonstandard sets are also known as \emph{internal sets} in the nonstandard analysis literature.}; an ultraproduct of standard finite sets will be called a \emph{nonstandard finite set}; and an ultraproduct of standard additive groups will be called a \emph{nonstandard additive group}.  Note that nonstandard additive groups are also additive groups in the usual (or ``external'') .  Also, any Cartesian product $A \times B$ of two nonstandard finite sets $A, B$ is again (up to some canonical isomorphism which we ignore) a nonstandard finite set.  Similarly for Boolean operations such as $A \cup B$, $A \cap B$, or $A \backslash B$.

Given a nonstandard finite set $A = \prod_{{\mathfrak n} \to \alpha} A_{\mathfrak n}$, we define its \emph{nonstandard cardinality} $|A|$ to be the nonstandard natural number
$$ |A| := \lim_{{\mathfrak n} \to \alpha} |A_{\mathfrak n}|.$$
Thus for instance, if $N$ is a nonstandard natural number, then the nonstandard finite set $\{ n \in {}^* \N: n \leq N\}$ has nonstandard cardinality $|A|=N$.  This notion of cardinality obeys the usual rules such as $|A \times B| = |A| |B|$ and $|A \cup B| \leq |A| + |B|$; we will use such rules without further comment below.  The notion of nonstandard cardinality of a nonstandard finite  set $A$ agrees with the usual (or ``external'') notion of cardinality when $|A|$ is a standard natural number, but if this is not the case then $A$ will be infinite in the external sense.

Now we recall the nonstandard formulation of asymptotic notation.  A nonstandard number $x$ (in ${}^* \N$, ${}^* \Z$, or ${}^* \R$) is said to be \emph{bounded} if one has $|x| \leq C$ for some standard real $C$, and \emph{unbounded} otherwise.  Thus for instance a nonstandard natural number is bounded if and only if it standard.  Since standard numbers cannot be unbounded, we abbreviate ``unbounded nonstandard number'' as ``unbounded number''.  If $|x| \leq \eps$ for every standard real $\eps>0$, we say that $x$ is \emph{infinitesimal}.  If $X, Y$ are nonstandard numbers with $Y > 0$, we write $X=O(Y)$, $X \ll Y$, or $Y \gg X$ if $X/Y$ is bounded, and $X=o(Y)$ if $X/Y$ is infinitesimal.  We caution that if one has sequences $X_n, Y_n$ of nonstandard numbers indexed by a standard natural number $n$, and $X_n \ll Y_n$, then no uniformity in $n$ is assumed\footnote{If $X_n$ could be extended in an ``internal'' fashion to nonstandard $n$, and $X_n \ll Y_n$ for all such $n$, then one could make the constants $C_n$ uniform in $n$ by the saturation properties of nonstandard analysis, but we will not need to exploit saturation in this paper.}; one can only say that $|X_n| \leq C_n Y_n$ for some standard $C_n$ that can depend on $n$.  On the other hand, if we know that $X_n = o(Y_n)$, then we have the uniform bound $|X_n| \leq \eps Y_n$ for \emph{all} standard $n$ and $\eps>0$.

A major advantage of the nonstandard formulation for us is the ability to easily define various properties of (nonstandard) finite sets, whose counterparts in the finitary world would either have to be somewhat vague or informal, or else involve a number of auxiliary quantitative parameters that one would then need to carefully keep track of.  We record the main properties we will need here:

\begin{definition}[Nonstandard concepts]\label{nonst}  Let $G$ be a nonstandard additive group, and let $A, B$ be nonstandard finite subsets of $G$.
\begin{itemize}
\item[(i)] We say that $A$ is \emph{small} if its cardinality $|A|$ is a standard natural number, and \emph{large} otherwise.  (Note that $A$ is small if and only if it is (externally) finite.)  We say that $A,B$ have \emph{comparable size} if $|A| \ll |B| \ll |A|$.
\item[(ii)]  We say that $A$ is \emph{somewhat sum-closed} if $\phi(A)$ is (standardly) finite. 
\item[(iii)] We write
$$ A+B := \{ a+b: a \in A, b \in B \}$$ 
for the Minkowski sum of $A$ and $B$, and say that $A$ has \emph{bounded doubling} if $|A+A| \ll |A|$.  For instance, any coset $x+H$ of a nonstandard finite group $H$ is of bounded doubling.
\item[(iv)] For any standard natural number $l$, we write $lA = A + \dots + A$ for the $l$-fold iterated sumset of $A$.  (
Although it can be done, we will not need to define $lA$ for unbounded $l$.)
\item[(v)] We say that $A$ \emph{avoids} $B$ if $|A \cap B| = o(|B|)$, and \emph{occupies} $B$ if it does not avoid $B$ (or equivalently, if $|A \cap B| \gg |B|$).  For instance, if $A$ and $B$ are groups, $A$ avoids $B$ when $A \cap B$ has (externally) infinite index in $B$, and $A$ occupies $B$ when $A \cap B$ has (externally) finite index in $B$.  See also Figure \ref{fig:com}.
\item[(vi)]  We say that $A$ and $B$ are \emph{commensurate} if they occupy each other, and \emph{transverse} if they avoid each other.  
\item[(vii)] A sequence $A_n$ of sets indexed by standard natural numbers $n$ is said to be an \emph{wandering sequence} if for every standard natural numbers $n,m$, we have $(A_n+A_m) \cap (A_n+A_{m'}) = \emptyset$ for all but finitely many $m'$.  For instance, if $x_n+H$ is a sequence of disjoint cosets of a single nonstandard finite group $H$, then the $x_n+H$ form an wandering sequence.
\item[(viii)] We say that $A$ has the \emph{wandering property} if the sequence $nA$ is an wandering sequence, or equivalently that $nA \cap n'A = \emptyset$ whenever $n'$ is sufficiently large depending on $n$.  For example, a coset $x+H$ of a nonstandard finite group $H$ has the wandering property if and only if $x$ has infinite order in $G/H$, or equivalently if $nx \not \in H$ for any standard $n$.  As another example, if $G = {}^* \Z$ are the nonstandard integers and $N$ is an unbounded natural number, then the nonstandard interval $\{ n \in {}^* \Z: N \leq n < 2N \}$ has the wandering property, but the nonstandard interval $\{ n \in {}^* \Z: 1 \leq n \leq N\}$ does not.  
\end{itemize}
All of the above notions are understood to be restricted to nonstandard finite sets; for instance, if we say that $A$ is large, it is understood that $A$ is also nonstandard finite.
\end{definition}

\begin{figure} [t]
\centering
\includegraphics[width=10cm]{./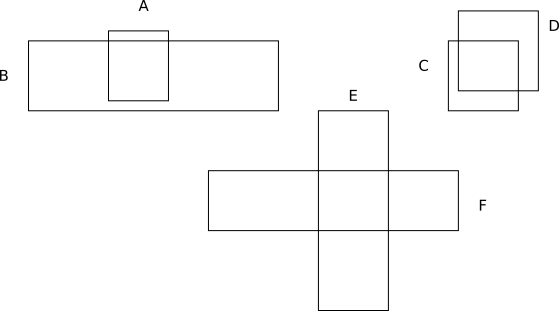}
\caption{In this schematic (and ``not-to-scale'') diagram, $B$ occupies $A$, but $A$ avoids $B$; $C$ and $D$ are commensurate; and $E$ and $F$ are transverse.}
\label{fig:com}
\end{figure}

\begin{remark}
We caution that the assertion ``$A$ is a large subset of $B$'' does \emph{not} mean that $A$ has positive density in $B$ in the sense that $|A| \gg |B|$; rather, it means instead that $|A|$ is unbounded.  (The notion of positive density is captured instead by the assertion ``$A$ is a \emph{commensurate} subset of $B$''.)  For instance, if $N$ is an unbounded natural number, then $\{ n \in {}^* \N: n \leq N \}$ is a large subset of $\{ n \in {}^* \N: n \leq N^2\}$, despite having infinitesimal density in the latter set.  As noted in the introduction, one key difficulty with the inductive argument is that one has to simultaneously deal with multiple sets that are all large, but can be of wildly differing sizes, making it tricky to transfer density information from one set to another.  For instance, if $B$ avoids $A$, and $A'$ is a large subset of $A$, we cannot automatically conclude that $B$ avoids $A'$ (for instance, consider the counterexample $B=A'=\{ n \in {}^* \N: n \leq N \}$ and $A=\{ n \in {}^* \N: n \leq N^2\}$).
\end{remark}

To illustrate some of the above concepts, we give an easy application of the pigeonhole principle that will be used repeatedly in the sequel:

\begin{lemma}[Pigeonhole principle]\label{pigeon}  Let $A$ be a somewhat sum-closed subset of a nonstandard additive group $G$, and write $k := \phi(A)$.  Let $A_1,\dots,A_{k+1}$ be subsets of $A$, with $A_1,\dots,A_k$ large, and $A_{k+1}$ nonstandard finite (we permit $A_{k+1}$ to be small).  Then there exist $1 \leq i < j \leq k+1$ such that
$$ | \{ (a_i,a_j) \in A_i \times A_j: a_i+a_j \in A \}| \geq \left(\frac{2}{k(k+1)}-o(1)\right) |A_i| |A_j|.$$
In particular,
$$ | \{ (a_i,a_j) \in A_i \times A_j: a_i+a_j \in A \}| \gg |A_i| |A_j|.$$
\end{lemma}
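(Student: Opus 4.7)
The plan is a probabilistic/double-counting argument in the spirit of Turán's theorem. I would sample $a_i \in A_i$ independently and uniformly at random for each $i = 1, \ldots, k+1$, and let $E$ denote the event that the sampled elements $a_1, \ldots, a_{k+1}$ are all distinct. The first step is to show $\Pr(E) = 1 - o(1)$. For any two indices $i < j$ in $\{1, \ldots, k+1\}$ with at least one of $i,j \leq k$, the probability of collision is
\[
\Pr(a_i = a_j) = \frac{|A_i \cap A_j|}{|A_i| \, |A_j|} \leq \frac{1}{\max(|A_i|, |A_j|)},
\]
which is infinitesimal because at least one of $|A_i|, |A_j|$ is unbounded (recall $A_1, \ldots, A_k$ are large). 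Since $k$ is a standard natural number, there are only $\binom{k+1}{2}$ pairs, and a union bound gives $\Pr(E^c) = o(1)$. Note that collisions between $a_{k+1}$ and $a_i$ ($i \leq k$) are also $o(1)$ even though $A_{k+1}$ may be small, so the argument does not require $A_{k+1}$ to be large.

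Next, conditional on $E$, the set $\{a_1, \ldots, a_{k+1}\}$ consists of $k+1$ distinct elements of $A$. Since $\phi(A) = k$, this set cannot be sum-avoiding in $A$, so there must exist indices $i < j$ with $a_i + a_j \in A$. Therefore
\[
\sum_{1 \leq i < j \leq k+1} \Pr\bigl(a_i + a_j \in A \text{ and } E\bigr) \;\geq\; \Pr(E) \;=\; 1 - o(1).
\]
By pigeonhole over the $\binom{k+1}{2} = \tfrac{k(k+1)}{2}$ pairs, some specific pair $(i,j)$ satisfies
\[
\Pr(a_i + a_j \in A) \;\geq\; \Pr\bigl(a_i + a_j \in A \text{ and } E\bigr) \;\geq\; \frac{2}{k(k+1)} - o(1).
\]
Multiplying by $|A_i|\,|A_j|$ converts the probability into the count of pairs, yielding the desired bound.

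The only subtle point, which I would highlight, is the distinctness estimate for $a_{k+1}$: even when $A_{k+1}$ is small (externally finite), the collision probability $\Pr(a_i = a_{k+1})$ for $i \leq k$ is still $o(1)$ because $|A_i|$ is unbounded, regardless of $|A_{k+1}|$. This is exactly why the hypothesis only requires $A_{k+1}$ to be nonstandard finite while $A_1, \ldots, A_k$ must be large. The final clause of the lemma (the $\gg$ bound) is immediate since $k$ is standard and hence $\frac{2}{k(k+1)}$ is a positive standard real.
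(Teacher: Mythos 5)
Your proof is correct and is essentially the same argument as the paper's: the paper counts tuples in $A_1\times\dots\times A_{k+1}$ directly (bounding the proportion with a coincidence $a_i=a_j$ by $1/|A_i|=o(1)$ and then pigeonholing over pairs), which is exactly your probabilistic argument phrased in terms of cardinalities rather than uniform random samples. The only cosmetic difference is that the paper first disposes of the degenerate case $A_{k+1}=\emptyset$, where sampling from $A_{k+1}$ would be ill-defined but the claim holds trivially.
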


\begin{proof}  We can assume that $A_{k+1}$ is non-empty, as the claim is trivial otherwise.  Consider the set $A_1 \times \dots \times A_{k+1}$ of tuples $(a_1,\dots,a_{k+1})$ with $a_i \in A_i$ for all $i=1,\dots,k+1$.  This is a nonstandard finite set of cardinality $|A_1| \dots |A_{k+1}|$.  For any $1 \leq i < j \leq k+1$, the subset of tuples with $a_i=a_j$ has cardinality at most $|A_1| \dots |A_{k+1}|/|A_i|$, which is $o( |A_1| \dots |A_{k+1}| )$ since $A_i$ is large.  Thus for a proportion $1-o(1)$ of the tuples $(a_1,\dots,a_{k+1})$ in $A_1 \times \dots \times A_{k+1}$, the $a_1,\dots,a_{k+1}$ are distinct elements of $A$.  Since $k = \phi(A)$, we conclude that for each such tuple, one of the sums $a_i+a_j$ lies in $A$.  By the pigeonhole principle, we can thus find $1 \leq i < j \leq k+1$ such that for a proportion $\frac{1}{\binom{k+1}{2}} - o(1)$ of the tuples, $a_i+a_j \in A$, and the claim follows.
\end{proof}

Theorem \ref{main} is a consequence of (and is in fact equivalent to) the following nonstandard formulation.

\begin{theorem}\label{main-nonst} Let $A$ be a somewhat sum-closed subset of a nonstandard additive group $G$, and set $k := \phi(A)$.
Then there exist nonstandard finite subgroups $H_1,\ldots,H_{m}$ of $G$ with $0 \leq m \leq k$
 such that $A \backslash (H_1 \cup \ldots \cup H_{m})$ is small and $A$ occupies each of the $H_1,\dots,H_m$.  Furthermore, if $m=k$, then
$A \subset H_1 \cup \dots \cup H_k$.
\end{theorem}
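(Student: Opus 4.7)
The plan is to prove Theorem \ref{main-nonst} by strong induction on $k = \phi(A)$, exploiting the nonstandard framework to sidestep the otherwise unavoidable hierarchy of quantitative parameters. The base case $k=1$ is exactly Proposition \ref{easy}: each of the five listed possibilities fits the conclusion, corresponding to $m=1$ (when $A$ is a subgroup or a $2$-torsion subgroup with $0$ removed, and $A$ is commensurate with $H_1$) or $m=0$ (when $A$ is one of the small sets $\{b\},\{b,0\},\{b,0,-b\}$). For the inductive step, if $A$ is small we take $m=0$ and are done, so we may assume $A$ is large.

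The next step is to extract a single nonstandard finite subgroup that $A$ occupies. Applying Lemma \ref{pigeon} with all of $A_1,\dots,A_{k+1}$ equal to $A$ shows that a positive proportion of pairs $(a,a')\in A\times A$ have $a+a'\in A$, so $A$ has large additive energy. A nonstandard version of the Balog--Szemer\'edi--Gowers theorem then produces a commensurate subset $A'\subseteq A$ of bounded doubling, and the Green--Ruzsa version of Freiman's theorem in arbitrary abelian groups embeds $A'$ into a nonstandard coset progression $H+P$ of comparable size, where $H$ is a nonstandard finite subgroup of $G$ and $P$ has bounded rank. To peel the progression part off, I would push $A'$ into the quotient $G/H$ and invoke the torsion-free lower bound \eqref{phi-log}: since the image of $A'$ in $G/H$ sits inside a bounded-rank progression, its $\phi$-value grows with its cardinality, but it is bounded above by $k$. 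Hence the image is of bounded size, so $A$ occupies a finite union of cosets of $H$, and in particular occupies some nonstandard finite subgroup $H_1$.

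The heart of the argument, and the step I expect to be by far the main obstacle, is then to produce a residual set to which the inductive hypothesis can be applied. One cannot simply use $A\setminus H_1$: the example in the introduction (an arbitrary subset of $x+H$ attached to $H$ with $2x\in H$) shows that removing a ``wrong'' subgroup can leave an unstructured piece whose $\phi$ is arbitrarily large. I would therefore run an enlargement algorithm, repeatedly adjoining to $H_1$ any element $x$ of $A$ for which some bounded multiple $nx$ lies in the current subgroup and for which the coset $x+H_1$ meets $A$ in a non-transverse way, producing a nonstandard finite subgroup $H_1'\supseteq H_1$ that $A$ still occupies. Once the algorithm terminates, $A\setminus H_1'$ should be transverse to $H_1'$ and to each of its finitely many relevant cosets, and the transversality combined with Lemma \ref{pigeon} should force $\phi(A\setminus H_1')\le k-1$, allowing induction.

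Inducting on the residual $A\setminus H_1'$ yields further subgroups $H_2,\dots,H_m$ with $m-1\le k-1$, and the transversality established above lets us simply take the union: $A$ still occupies each $H_i$, and $A\setminus(H_1'\cup\cdots\cup H_m)$ is small. For the final clause, if $m=k$ but the residual is still non-empty, then any leftover element $a$ of $A$ would necessarily (by another application of Lemma \ref{pigeon} to the $k+1$ large sets consisting of $\{a\}$-translates and dense pieces of each $H_i$) be forced to be absorbable into some $H_i$, contradicting the termination of the enlargement algorithm; therefore $A\subseteq H_1\cup\cdots\cup H_k$ in this case. The critical technical difficulty throughout is ensuring that the various subgroups and residual pieces remain transverse to each other despite their wildly differing (nonstandard) sizes, which is precisely the issue that motivates the nonstandard formalism of Definition \ref{nonst}.
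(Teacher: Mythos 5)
Your proposal runs a direct induction on $k=\phi(A)$, extracting one subgroup $H_1'$ and then claiming ``the transversality combined with Lemma \ref{pigeon} should force $\phi(A\setminus H_1')\le k-1$.'' This step does not hold, and the paper explicitly flags it as the reason a naive induction on $k$ fails. A concrete counterexample: take $A=H_1\cup H_2$ with $H_1,H_2$ large transverse nonstandard finite subgroups and $H_1\cap H_2=\{0\}$, with $H_2$ not $2$-torsion. Then $\phi(A)=2$, and the enlargement algorithm you describe leaves $H_1$ unchanged (every coset $x+H_1$ with $x\in H_2\setminus H_1$ meets $A$ in exactly one point, which is transverse). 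But $A\setminus H_1'=H_2\setminus\{0\}$, which contains sum-avoiding pairs $\{b,-b\}$ (since $0\notin A\setminus H_1'$), so $\phi(A\setminus H_1')\ge 2 > k-1$. In general $\phi$ need not decrease --- and can even increase --- when a subgroup is removed, because sums that used to land in $A$ (namely in $H_1'$) no longer land in the residual. Without a guaranteed decrease in $\phi$, your induction cannot close, and nothing bounds the total number of subgroups by $k$.

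The paper takes a structurally different route precisely to avoid this. It introduces an auxiliary statement, Theorem \ref{main-2}, and inducts on the number $m$ of subgroups to be extracted, never on $\phi$. The inductive hypothesis carries extra structure --- the groups $H_1,\dots,H_{m'}$ are pairwise transverse, and the residual avoids every coset of every $H_i$ (conditions (i)--(iii)) --- and the same fixed set $A$ (with the same $\phi(A)$) is fed into each stage. Theorem \ref{main-nonst} is then read off from Theorem \ref{main-2} with $m=k$ by a short pigeonhole argument: if $m'=k$ and the residual were nonempty, one could form $k+1$ sets (one element plus the $k$ dense intersections $A\cap H_i$) to which Lemma \ref{pigeon} applies, and every resulting sum is pushed out of $A$ by the transversality conditions, contradicting $\phi(A)=k$. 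Your enlargement idea is in spirit what the paper's Propositions \ref{fc} and \ref{fc-complex} do, but the genuine difficulty there is the coordination across \emph{all} the $H_i$ simultaneously: enlarging one group can create new problematic cosets relative to another, and the paper handles this by a downward induction over codimensions of ``involved subspaces'' relative to a growing ``core.'' Finally, your method for stripping off the progression part (projecting $A'$ to $G/H$ and invoking \eqref{phi-log}) needs care: $G/H$ is not torsion-free, and the image of $P$ need not embed in $\Z^r$; the paper instead reduces to a proper progression and uses the wandering property together with Lemma \ref{tf}.
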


Let us now see how Theorem \ref{main-nonst} implies Theorem \ref{main}; the converse implication is not needed here and is left to the interested reader.  This is a routine application of the transfer principle in nonstandard analysis, but for the convenience of the reader we provide a self-contained argument.  Suppose for contradiction that Theorem \ref{main} failed.  Carefully negating the quantifiers, we conclude that there is a standard $k \geq 1$, a sequence $G_{\mathfrak n}$ of standard additive groups and finite subsets $A_{\mathfrak n}$ of $G_{\mathfrak n}$ with the following property for any ${\mathfrak n}$: $\phi(A_{\mathfrak n}) \leq k$, and there does \emph{not} exist finite subgroups $H_{1,{\mathfrak n}},\ldots,H_{m,{\mathfrak n}}$ of $G_{\mathfrak n}$ with $0 \leq m \leq k$ 
 such that 
$$
|A_{\mathfrak n} \backslash (H_{1,{\mathfrak n}} \cup \ldots \cup H_{m,{\mathfrak n}})| \leq {\mathfrak n}
$$
and  
$$
|A_{{\mathfrak n}} \cap H_{i,{\mathfrak n}}| \geq |H_{i,{\mathfrak n}}|/{\mathfrak n}
$$
for all $1 \leq i \leq m$, and such that
$$ A_{{\mathfrak n}} \subseteq H_{1,{\mathfrak n}} \cup \ldots \cup H_{k,{\mathfrak n}}$$
in the case $m=k$.

Let $G := \prod_{{\mathfrak n} \to \alpha} G_{{\mathfrak n}}$ denote the ultraproduct of the $G_{\mathfrak n}$, and similarly define
$A := \prod_{{\mathfrak n} \to \alpha} A_{{\mathfrak n}}$.  Then $G$ is a nonstandard additive group, and $A$ is a nonstandard finite subset of $G$.  If $A$ contained a subset $\{x_1,\dots,x_{k+1}\}$ of cardinality exactly $k+1$ that was sum-avoiding in $A$, then by writing each $x_i$ as an ultralimit $x_i = \lim_{{\mathfrak n} \to \alpha} x_{i,{\mathfrak n}}$ it is easy to see that for an $\alpha$-large set of ${\mathfrak n}$, one has $\{x_{1,{\mathfrak n}},\dots,x_{k+1,{\mathfrak n}}\}$ a sum-avoiding subset of $A_{\mathfrak n}$ of cardinality exactly $k+1$, contradicting the hypothesis $\phi(A_{\mathfrak n}) \leq k$.  Thus, such sets $\{x_1,\dots,x_{k+1}\}$  do not exist, and so $\phi(A) \leq k$.  Applying Theorem \ref{main-nonst}, we can now find nonstandard finite groups $H_1,\dots,H_m$ with $0 \leq m \leq k$ such that
$$ |A \backslash (H_1 \cup \ldots \cup H_{m})| \leq C$$
and
$$ |A \cap H_i| \geq |H_i| / C$$
for some standard $C$ and all $1 \leq i \leq m$, and such that $A \subset H_1 \cup \dots \cup H_k$ if $m=k$.  Writing each $H_i = \prod_{{\mathfrak n} \to \alpha} H_{i,{\mathfrak n}}$ as the ultraproduct of finite subgroups of $G_{\mathfrak n}$, we conclude for an $\alpha$-large set of ${\mathfrak n}$ that
$$ |A_{\mathfrak n} \backslash (H_{1,\mathfrak n} \cup \ldots \cup H_{m,{\mathfrak n}})| \leq C$$
and
$$ |A_{\mathfrak n} \cap H_{i,{\mathfrak n}}| \geq |H_{i,{\mathfrak n}}| / C$$
for all $1 \leq i \leq m$, with $A_{\mathfrak n} \subset H_{1,\mathfrak n} \cup \ldots \cup H_{k,{\mathfrak n}}$ when $m=k$.  But this contradicts the construction of the $A_{\mathfrak n}$ for ${\mathfrak n} > C$, and the claim follows.

It remains to establish Theorem \ref{main-nonst}.  This theorem will be derived from the following claim.

\begin{theorem}\label{main-2}  Let $A$ be a somewhat sum-closed subset of a nonstandard finite group $G$, and let $m$ be a standard non-negative integer.  Then there exists $0 \leq m' \leq m$ and large pairwise transverse subgroups $H_1,\dots,H_{m'}$ of $G$ with the following properties, with $A'$ denoting the residual set $A' := A \backslash \bigcup_{i=1}^{m'} H_i$:
\begin{itemize}
\item[(i)] For each $i=1,\dots,m'$, $A$ occupies $H_i$, but $A'$ avoids all cosets $x+H_i$ of $H_i$.
\item[(ii)] If the set $A'$ is large and $1 \leq i \leq m'$, then all cosets $x+H_i$ of $H_i$ avoid $A'$.  In other words, by (i), $A'$ is transverse to all cosets of $H_1,\dots,H_{m'}$.
\item[(iii)]  If $m' < m$, then the set $A'$ is small.
\end{itemize}
\end{theorem}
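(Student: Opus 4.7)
The plan is to prove Theorem \ref{main-2} by induction on the standard nonnegative integer $m$, constructing the subgroups $H_1,\ldots,H_{m'}$ one at a time. The base case $m=0$ is immediate: take $m'=0$, so that $A'=A$ and conclusions (i)--(iii) all hold vacuously.

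For the inductive step, I would first apply the theorem at level $m-1$ to produce $0\leq m''\leq m-1$ large pairwise transverse subgroups $H_1,\ldots,H_{m''}$ with associated residual $A''$ satisfying (i)--(iii) at level $m-1$. If $m''<m-1$ then (iii) at level $m-1$ forces $A''$ to be small, and we may set $m'=m''$; similarly, if $m''=m-1$ and $A''$ is small, set $m'=m-1$. In both of these easy cases the conclusions at level $m$ are inherited directly from those at level $m-1$.

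The substantive case is when $m''=m-1$ and $A''$ is still large. Here the task is to produce one additional large subgroup $H_m$ such that (a) $A$ occupies $H_m$, (b) $H_m$ is transverse to each of $H_1,\ldots,H_{m-1}$, (c) the new residual $A':=A''\setminus H_m$ avoids every coset of $H_m$, and (d) whenever $A'$ is itself large, it is transverse to every coset of $H_1,\ldots,H_m$. The existence of such an $H_m$ is the heart of the paper. Since $A$ is somewhat sum-closed and $A''$ is large, Lemma \ref{pigeon} applied with $A_1=\ldots=A_k=A''$ supplies many pairs in $A''\times A''$ whose sum lands back in $A$; a Balog--Szemer\'edi--Gowers and Freiman-type argument in an arbitrary abelian group then extracts a coset progression correlated with $A$; the torsion-free progression factor must be trivial, since $\phi(A)$ being standardly finite is incompatible with the logarithmic lower bound \eqref{phi-log} for torsion-free sets. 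One then enlarges the resulting nonstandard finite subgroup $H$ to $H_m$, absorbing any torsion that would otherwise violate transversality with the earlier $H_i$ or cluster $A'$ on a coset of $H$.

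The principal obstacle, and the reason for casting the argument in nonstandard analysis, is this final enlargement. The previously chosen groups $H_1,\ldots,H_{m-1}$ and the residual $A''$ can live on wildly different scales, so fitting $H_m$ into the existing transversality hierarchy while preserving properties (i) and (ii) for the earlier indices requires juggling many quantitative parameters at once, which the nonstandard formalism reduces to uniform $o(\cdot)/\gg$ statements. Once $H_m$ is in hand, (iii) at level $m$ is vacuous since $m'=m$; (i) at $i=m$ holds by construction of $H_m$ and at $i<m$ is inherited via the inclusion $A'\subseteq A''$; and for (ii) one splits cases on whether $A''$ is commensurate or transverse to $H_m$. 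In the commensurate case the removal of $H_m$ leaves $A'$ small and (ii) is vacuous, while in the transverse case $A'$ and $A''$ are commensurate, so the bound $|A''\cap(x+H_i)|=o(|A''|)$ from the inductive hypothesis passes to $o(|A'|)$ for $i<m$, the case $i=m$ being built into the choice of $H_m$.
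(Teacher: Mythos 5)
Your high-level scaffolding is the same as the paper's: induction on $m$, dispatch the easy cases, pigeonhole to get many additive quadruples in the residual $A''$, then Balog--Szemer\'edi and Freiman in an arbitrary abelian group to extract a coset progression commensurate with $A''$, then kill the progression factor and enlarge the resulting group $H$.

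However, the step where you verify property (ii) has a genuine gap, and it is exactly the step the paper has to work hardest on. You propose to "split cases on whether $A''$ is commensurate or transverse to $H_m$", but this dichotomy is neither exhaustive nor useful here. By construction $H_m$ is commensurate with $A''$ (that is what Freiman gives you), so $A''$ always occupies $H_m$ and the "transverse" case never arises; conversely, in the "commensurate" case it is simply false that $A':=A''\setminus H_m$ must be small (e.g.\ $A''$ could put half its mass in $H_m$ and half outside). The dangerous middle regime, where $A'$ is large but much smaller than $A''$, is precisely the one you must handle, and there the inductive bound $|A''\cap(x+H_i)|=o(|A''|)$ does \emph{not} pass to $o(|A'|)$ for $i<m$ --- a coset of some earlier $H_i$ that was negligible against $A''$ can be dominant against the shrunken $A'$. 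The paper flags exactly this failure mode in the remark after Definition \ref{nonst}.

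The second, related gap is that you only enlarge the newest group $H_m$. To get (ii) one must in general enlarge \emph{all} of $H_1,\dots,H_m$ to commensurate supergroups $H_1',\dots,H_m'$ simultaneously, because the new residual may cluster on cosets of the earlier groups just as easily as on cosets of the new one. This is the content of Proposition \ref{fc-complex}, with its machinery of cores, $C$-residual sets, and $C$-involved subspaces of each codimension, proved by a separate downward induction on codimension and repeated applications of Lemma \ref{tc}/\ref{tf}. Proposition \ref{fc} alone (the single-group enlargement you sketch) is explicitly noted in the paper as being insufficient for general $m$, working only when $m=0$. Without something playing the role of Proposition \ref{fc-complex}, the induction does not close.

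A smaller point: you justify eliminating the rank-$r$ progression factor by appealing informally to the bound \eqref{phi-log} for torsion-free sets; the paper instead uses properness of the progression, covering by sets with the wandering property, and Lemma \ref{tf}, because after adding back the torsion group $H$ the set $H+P$ is not itself torsion-free and \eqref{phi-log} does not directly apply. This is fixable but not a substitute for the actual argument.
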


\begin{figure} [t]
\centering
\includegraphics[width=5cm]{./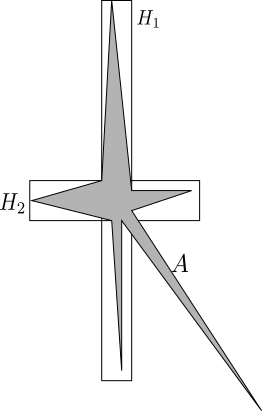}
\caption{A schematic depiction of a situation described by Theorem \ref{main-2} in which $2 = m' < m$, and $A'$ (the portion of $A$ outside of $H_1 \cup H_2$) is somewhat large, but transverse to all cosets of $H_1$ or $H_2$.  Note that $A'$ is permitted to be significantly smaller than $H_1$, $H_2$, or $A$.}
\label{fig:mainpic}
\end{figure}

See Figure \ref{fig:mainpic}.

Let us now see how Theorem \ref{main-2} implies Theorem \ref{main-nonst}.  Let $A$ be a somewhat sum-closed subset of a nonstandard finite group $G$, and set $k := \phi(A)$.  We apply Theorem \ref{main-2} with $m=k$ to obtain $0 \leq m' \leq k$ and large pairwise transverse $H_1,\dots,H_{m'}$ of $G$ with the properties (i)-(iii).  If $m' < k$, then we are done from property (iii), so we may assume $m=k$.  If $A'$ is empty then we are again done, so suppose for contradiction that $A'$ contains an element $a_{k+1}$.

Note that for $i=1,\dots,k$, $A \cap H_i$ is commensurate with the large set $A_i$ and is thus itself large.  Applying Lemma \ref{pigeon} with $A_i := A \cap H_i$ for $i=1,\dots,k$ and $A_{k+1} := \{a_{k+1}\}$, we can find $1 \leq i < j \leq k+1$ such that
$$ | \{ (a_i,a_j) \in A_i \times A_j: a_i+a_j \in A \}| \gg |A_i| |A_j| \gg |H_i| |H_j|.$$
There are two cases, depending on whether $j \leq k$ or $j = k+1$.  First suppose that $j \leq k$.  For any $1 \leq l \leq k$ with $l \neq i$, $H_l$ is transverse to $H_i$, and so
$$ | \{ (a_i,a_j) \in A_i \times A_j: a_i+a_j \in H_l \}| \leq |A_j| |H_i \cap H_l| = o( |H_i| |H_j| ).$$
Similarly if $l \neq j$ instead of $l \neq i$.  By the triangle inequality, we conclude that
$$ | \{ (a_i,a_j) \in A_i \times A_j: a_i+a_j \in A' \}| \gg |H_i| |H_j|.$$
However, since $A'$ avoids all cosets $a_i+H_j$ of $H_j$, we have
$$ | \{ (a_i,a_j) \in A_i \times A_j: a_i+a_j \in A' \}| = o( |A_i| |H_j| ) = o( |H_i| |H_j|),$$
giving the required contradiction.

Now suppose that $j=k+1$, thus
$$ | \{ a_i \in A_i: a_i+a_{k+1} \in A \}| \gg |H_i|$$
and hence
$$ |A \cap (H_i + a_{k+1})| \gg |H_i|.$$
For any $1 \leq l \leq k$ distinct from $i$, the group $H_l$ is transverse to the group $H_i$ and hence to the coset $H_i+a_{k+1}$; also $H_i$ is disjoint from $H_i+a_{k+1}$ by construction.  Finally, $A'$ does not occupy $H_i+a_{k+1}$ by (i).  Combining these facts using the triangle inequality, we conclude that
$$ |A \cap (H_i + a_{k+1})| = o(|H_i|),$$
again giving the required contradiction.  This concludes the derivation of Theorem \ref{main-nonst} (and hence Theorem \ref{main}) from Theorem \ref{main-2}.

It remains to establish Theorem \ref{main-2}.   Before we begin the main argument, we establish some preliminary facts about somewhat sum-closed sets that we will use several times in the sequel.  We begin with a somewhat technical lemma that asserts (roughly speaking) that somewhat sum-closed sets $A$ are avoided by at least one member of an wandering sequence $B_n$, provided that some auxiliary hypotheses are satisfied.

\begin{lemma}[Avoiding an wandering sequence]\label{tc}  Let $A$ be a somewhat sum-closed subset of a nonstandard additive group $G$.  Let $B_n$ be an wandering sequence of subsets of $G$, indexed by standard natural numbers $n$.  For each $n$, suppose we have a large subset $A'_n$ of $A$, such that for all but finitely many standard natural numbers $m$, we have
\begin{equation}\label{trick}
 |(z + A'_n) \cap (A \backslash A'_n) \cap (B_n+B_m)| = o(|A'_n|)
\end{equation}
for all $z \in G$ (or equivalently, $(A \backslash A'_n) \cap (B_n+B_m)$ avoids all translates of $A'_n$).  Then there must exist $n$ such that $A'_n$ is avoided by $B_n$.
\end{lemma}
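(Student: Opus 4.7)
The plan is to argue by contradiction. Suppose that for every standard natural number $n$ one has $|A'_n \cap B_n| \gg |A'_n|$, and set $C_n := A'_n \cap B_n$; each $C_n$ is then a large subset of $A$ with $|C_n| \asymp |A'_n|$. Write $k := \phi(A)$, a standard positive integer since $A$ is somewhat sum-closed.

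Since the wandering property of $(B_n)$ and the hypothesis \eqref{trick} each exclude only finitely many values of $m$ at every stage, I would inductively pick standard indices $n_1 < n_2 < \dots < n_{k+1}$ such that for every $1 \leq i < j \leq k+1$, the hypothesis \eqref{trick} holds both with $(n, m) = (n_i, n_j)$ and with $(n, m) = (n_j, n_i)$, and so that for each fixed $i$ the sets $B_{n_i} + B_{n_{j'}}$ (for $j' \neq i$) are pairwise disjoint, as permitted by wandering.

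Next I would apply Lemma \ref{pigeon} to the large sets $C_{n_1}, \dots, C_{n_{k+1}} \subset A$ to produce some $1 \leq i < j \leq k+1$ with
\[
\bigl|\{(c_i, c_j) \in C_{n_i} \times C_{n_j} : c_i + c_j \in A\}\bigr| \gg |C_{n_i}||C_{n_j}|.
\]
For any such $(c_i, c_j)$, injectivity of translation and the inclusions $c_i \in A'_{n_i} \cap B_{n_i}$, $c_j \in B_{n_j}$ place the sum $c_i + c_j$ in $(c_j + A'_{n_i}) \cap A \cap (B_{n_i} + B_{n_j})$. Splitting $A = A'_{n_i} \sqcup (A \setminus A'_{n_i})$ and invoking \eqref{trick} with $z = c_j$, $n = n_i$, $m = n_j$, the pairs whose sum lies in $A \setminus A'_{n_i}$ contribute (after summing over $c_j \in C_{n_j}$) at most $|C_{n_j}| \cdot o(|A'_{n_i}|) = o(|C_{n_i}||C_{n_j}|)$, using $|C_{n_i}| \asymp |A'_{n_i}|$. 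Hence $\gg |C_{n_i}||C_{n_j}|$ pairs satisfy $c_i + c_j \in A'_{n_i}$; a symmetric use of \eqref{trick} with $(n, m) = (n_j, n_i)$ and $z = c_i$ forces most pairs into $A'_{n_i} \cap A'_{n_j}$.

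The main obstacle is turning this into a contradiction. I would pigeonhole on $c_j$ to extract $\gg |C_{n_j}|$ ``good'' elements $c_j \in C_{n_j}$ for which $|A'_{n_i} \cap (c_j + B_{n_i})| \gg |A'_{n_i}|$, and then exploit the wandering-derived disjointness of $\{B_{n_i} + B_{n_{j'}}\}_{j' \neq i}$ to argue that the analogous translates (for good $c_{j'} \in C_{n_{j'}}$, $j' \neq i$) produce pairwise disjoint subsets of $A'_{n_i}$ each of size $\gg |A'_{n_i}|$, forcing the total to exceed $|A'_{n_i}|$. The delicate point is that Lemma \ref{pigeon} yields information for only a single pair $(i, j)$ rather than for every $j' \neq i$; closing the argument thus appears to require either a refined iteration of Lemma \ref{pigeon} (say, after discarding the previously exploited indices) or a direct appeal to the stronger fact from the proof of Lemma \ref{pigeon} that for a $1 - o(1)$ proportion of tuples $(c_1, \dots, c_{k+1}) \in C_{n_1} \times \cdots \times C_{n_{k+1}}$ \emph{some} pair sums to $A$, thereby distributing the contribution across all $\binom{k+1}{2}$ index pairs simultaneously.
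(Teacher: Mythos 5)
There is a genuine gap, and you have flagged it yourself: you cannot close the contradiction. The issue is that you are trying to use the wandering property as a \emph{disjointness} statement across many indices $j'$ simultaneously, and then complaining (rightly) that Lemma~\ref{pigeon} only hands you a single pair $(i,j)$. That mismatch is real, and the fix is not to strengthen the pigeonhole output but to change how you use the wandering property.

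The paper's route is to use the wandering property to produce a clean \emph{quantitative upper bound for a single pair}. Fix $n$ and set $C_n := A'_n \cap B_n$, which under the contradiction hypothesis is comparable in size to $A'_n$. Since the sets $B_n + B_m$ are pairwise disjoint for all but finitely many $m$, only finitely many of them can satisfy $|A'_n \cap (B_n + B_m)| \geq \frac{1}{k(k+1)}|C_n|$ (otherwise their union would exceed $|A'_n|$, using $|C_n| \asymp |A'_n|$). So for all but finitely many $m$ you get $|A'_n \cap (B_n+B_m)| \leq \frac{1}{k(k+1)}|C_n|$. Combining this with~\eqref{trick} bounds the \emph{entire} quantity
\[
|(z + A'_n) \cap A \cap (B_n + B_m)| \leq \Bigl(\tfrac{1}{k(k+1)} + o(1)\Bigr)|C_n|
\]
for all $z \in G$: you split $A = A'_n \sqcup (A\setminus A'_n)$ exactly as you do, but you then bound the $A'_n$ piece directly via $|(z+A'_n)\cap A'_n \cap (B_n+B_m)| \leq |A'_n \cap (B_n+B_m)|$, not via some disjointness argument over many $j'$. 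You were stuck precisely because you stopped after handling the $A\setminus A'_n$ piece and did not see that the wandering property also controls the $A'_n$ piece for all but finitely many $m$.

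Once you have this, pick $n_1 < \dots < n_{k+1}$ so the displayed bound holds with $(n,m)=(n_i,n_j)$ for all $i<j$. Applying Lemma~\ref{pigeon} to $C_{n_1},\dots,C_{n_{k+1}}$ gives some $i<j$ with at least $\bigl(\tfrac{2}{k(k+1)}-o(1)\bigr)|C_{n_i}||C_{n_j}|$ pairs $(a_i,a_j)$ summing into $A$. Each such sum lies in $(a_j + A'_{n_i}) \cap A \cap (B_{n_i}+B_{n_j})$, so summing the displayed bound over $a_j\in C_{n_j}$ caps the count at $\bigl(\tfrac{1}{k(k+1)}+o(1)\bigr)|C_{n_i}||C_{n_j}|$. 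The contradiction is simply $\tfrac{2}{k(k+1)} > \tfrac{1}{k(k+1)}$. No distribution over all $\binom{k+1}{2}$ pairs is needed; the explicit constant in Lemma~\ref{pigeon} is doing the work your disjointness scheme was groping for.
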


\begin{proof}  Set $k := \phi(A)$.  Suppose for contradiction that $A'_n$ is occupied by $B_n$ for each $n$.  Then $A'_n$ has comparable size to $A'_n \cap B_n$ for each $n$ (though the comparability need not be uniform in $n$); in particular, $A'_n \cap B_n$ is large.  For fixed $n$, we claim that
$$ |A'_n \cap (B_n+B_m)| \leq \frac{1}{k(k+1)} |A'_n \cap B_n|$$
(say) for all but finitely many standard $m$.  For, if this inequality failed for infinitely many $m$, then by using the wandering nature of the $B_n$ one could find an infinite sequence $m_1,m_2,\dots$ with
$$ |A'_n \cap (B_n+B_{m_i})| \geq \frac{1}{k(k+1)} |A'_n \cap B_n|$$
for all $i$, with the $B_n + B_{m_i}$ pairwise disjoint, but this contradicts the fact that $A'_n$ and $A'_n \cap B_n$ have comparable size.

From the above claim and \eqref{trick} (and the comparable size of $A'_n$ and $A'_n \cap B_n$) we have
$$ |(z+A'_n) \cap A \cap (B_n+B_m)| \leq \left(\frac{1}{k(k+1)}+o(1)\right) |A'_n \cap B_n|$$
for all $z \in G$, assuming $m$ is sufficiently large depending on $n$.  Iterating, we can find an increasing sequence $n_1,\dots,n_{k+1}$ such that
\begin{equation}\label{bah}
|(z+A'_{n_i}) \cap A \cap (B_{n_i} + B_{n_j})| \leq \left(\frac{1}{k(k+1)}+o(1)\right) |A'_{n_i} \cap B_{n_i}|
\end{equation}
for all $1 \leq i < j \leq k+1$ and $z \in G$.  On the other hand, by applying Lemma \ref{pigeon} to the sets $A_i := A'_{n_i} \cap B_{n_i}$, we can find $1 \leq i < j \leq k+1$ such that
\begin{equation}\label{so}
\begin{split}
& | \{ (a_i,a_j) \in (A'_{n_i} \cap B_{n_i}) \times (A'_{n_j} \cap B_{n_j}): a_i+a_j \in A \}| \\
&\quad \geq (\frac{2}{k(k+1)}-o(1)) |A'_{n_j} \cap B_{n_i}| |A'_{n_j} \cap B_{n_j}|.
\end{split}
\end{equation}
Note that for $(a_i,a_j)$ contributing to the left-hand side of \eqref{so}, we have
$$ a_i + a_j \in (a_j + A'_{n_i}) \cap A \cap (B_{n_i} + B_{n_j}).$$
Summing in $a_j$ using \eqref{bah} and then in $a_i$, we can thus upper bound the left-hand side of \eqref{so} by
$$ \left(\frac{1}{k(k+1)}+o(1)\right) |A'_{n_i} \cap B_{n_i}| |A'_{n_j} \cap B_{n_j}|,$$
giving the required contradiction.
\end{proof}

Lemma \ref{tc} leads to the following useful consequence.

\begin{lemma}[Avoiding wandering sets]\label{tf}  Let $G$ be a nonstandard additive group.  Let $B$ be a subset of $G$ with the wandering property. Let $A$ be a somewhat sum-closed subset of $G$, and let $A'$ be a large subset of $A$, such that 
\begin{equation}\label{smal}
|(A \backslash A') \cap lB| = o(|A'|)
\end{equation}
for all but finitely many natural numbers $l$.  Then $B$ avoids $A'$.
\end{lemma}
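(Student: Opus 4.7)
The plan is to argue by contradiction, assuming $B$ does not avoid $A'$, i.e., $|B \cap A'| \gg |A'|$, and then applying Lemma \ref{tc} to derive a contradiction. The core idea is to show that infinitely many dilates $2^j B$ also occupy $A'$; once this is done, Lemma \ref{tc} applied to a suitable wandering subsequence drawn from these dilates produces the contradiction, since Lemma \ref{tc} would force at least one such dilate to avoid $A'$.

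The crux is a doubling induction establishing $|A \cap 2^j B| \gg |A'|$ for every standard $j \geq 0$. The base case $j=0$ is immediate from the contradiction hypothesis since $|A \cap B| \geq |A' \cap B| \gg |A'|$. For the inductive step, given $|A \cap 2^{j-1} B| \gg |A'|$, I apply Lemma \ref{pigeon} with $A_1 = \dots = A_{k+1} := A \cap 2^{j-1} B$ (where $k := \phi(A)$). This produces some pair $(i,i')$ together with $\gg |A \cap 2^{j-1} B|^2$ pairs $(a,b)$ in $(A \cap 2^{j-1} B)^2$ with $a \neq b$ and $a+b \in A$. Each such sum lies in $2^{j-1} B + 2^{j-1} B = 2^j B$ and has multiplicity at most $|A \cap 2^{j-1} B|$, so dividing produces $\gg |A \cap 2^{j-1} B| \gg |A'|$ distinct elements of $A \cap 2^j B$, closing the induction.

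Combining with hypothesis \eqref{smal}: for every $j$ such that $2^j$ lies outside the finite exception set of \eqref{smal}, we have
$|A' \cap 2^j B| \geq |A \cap 2^j B| - |(A \backslash A') \cap 2^j B| \gg |A'|$,
so $2^j B$ occupies $A'$ for infinitely many $j$.

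Finally, pick any strictly increasing sequence $j_1 < j_2 < \dots$ with each $2^{j_n}$ outside the exception set, and apply Lemma \ref{tc} with $B_n := 2^{j_n} B$ and $A'_n := A'$. The sequence $(B_n)$ inherits the wandering property from $B$ (any subsequence of the wandering sequence $\{lB\}_l$ is wandering), and the hypothesis of Lemma \ref{tc} reduces via \eqref{smal} to $|(A \backslash A') \cap (2^{j_n} + 2^{j_m}) B| = o(|A'|)$, which holds for all but finitely many $m$. Lemma \ref{tc} then forces some $n$ with $|B_n \cap A'| = o(|A'|)$, directly contradicting $|2^{j_n} B \cap A'| \gg |A'|$ from the previous step. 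The main subtlety is the multiplicity bound in the doubling step, which is what prevents degeneration and allows the lower bound $\gg |A'|$ to propagate through every iteration of the induction.
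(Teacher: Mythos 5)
Your proof is correct and follows essentially the same route as the paper's: suppose for contradiction that $B$ occupies $A'$, bootstrap via Lemma \ref{pigeon} and a multiplicity count to show the dilates $2^j B$ also occupy $A'$, then invoke Lemma \ref{tc} on this wandering sequence to force a contradiction. The one small difference is that you track $|A \cap 2^j B| \gg |A'|$ through the induction and only pass to $A' \cap 2^j B$ afterwards using \eqref{smal}, which handles the finitely many exceptional $l$ in \eqref{smal} a bit more cleanly than the paper's direct claim that $|A' \cap 2^j B| \gg |A'|$ at every step.
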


This lemma is already interesting in the case $A=A'$, in which case it asserts that a set $B$ with the wandering property must necessarily avoid any large somewhat sum-closed set $A$.  Another illustrative case is when $A'$ and $B$ are contained in some subgroup $H$ of $G$, and $A \backslash A'$ is contained in some subgroup $K$ with $H \cap K = \emptyset$, in which case \eqref{smal} is automatically satisfied.  A key point in this lemma is that $A'$ is permitted to be significantly smaller than $A$, so long as the technical condition \eqref{smal} can be verified.

\begin{proof}  
Suppose for contradiction that $B$ occupied $A'$, so $A' \cap B$ is commensurate to $A'$ and is in particular large.  Applying Lemma \ref{pigeon} with $A_i := A' \cap B$ for $i=1,\dots,\phi(A)+1$, we have
$$ |\{ (a_1,a_2) \in (A' \cap B) \times (A' \cap B): a_1+a_2 \in A \}| \gg |A' \cap B|^2 \gg |A'|^2$$
and thus
$$ |\{ (a_1,a_2) \in A': a_1+a_2 \in A' \cap 2B \}| \gg |A'|^2.$$
Each element of $A' \cap 2B$ contributes at most $|A'|$ to the left-hand side, and hence
$$ |A' \cap 2B| \gg |A'|$$
and hence $2B$ occupies $A'$.  Iterating this argument, we see that $2^n B$ occupies $A'$ for every standard natural $n$.  

By hypothesis, the sequence $2^n B$ is n wandering sequence.  From \eqref{smal} we have for all but finitely many $m$ that
$$
 |(z + A') \cap (A \backslash A') \cap (2^n B + 2^m B)| = o(|A'|)
$$
for all $z \in G$.  Applying Lemma \ref{tc}, we conclude that there is a standard natural $n$ such that $A'$ is avoided by $2^n B$, giving the required contradiction.
\end{proof} 

%
%
%
%
%
%

\section{Main argument}

We now prove Theorem \ref{main-2}.  We achieve this by induction on $m$.  When $m=0$ the claim is trivially true, so we now assume inductively that Theorem \ref{main} has been proven for some value of $m \geq 0$, and seek to establish the theorem for $m+1$.
The reader may initially wish to follow the argument below in the case $m=0$, which is significantly simpler than the general case.

Let $A$ be somewhat sum-closed subset of a nonstandard finite group $G$.  By the induction hypothesis, we can find $0 \leq m' \leq m$ and pairwise transverse large subgroups $H_1,\dots,H_{m'}$ of $G$ obeying the properties (i)-(iii) of Theorem \ref{main-2}.

If $m' < m$, then from property (iii) we obtain Theorem \ref{main-2} for $A$ and $m+1$, so we may assume that $m'=m$.  As before we set
$$ A' := A \backslash \bigcup_{i=1}^m H_i.$$
If $A'$ is small then we are done, so let us assume that $A'$ is large.  In particular, by (i) and (ii), all of the cosets of $H_1,\dots,H_m$ are transverse to $A'$.

Applying Lemma \ref{pigeon} with all $A_i$ equal to $A'$, we conclude that
$$ |\{ (a_1,a_2) \in A' \times A': a_1+a_2 \in A \}| \gg |A'|^2.$$
For any $1 \leq l \leq m$, all cosets $-a_2+H_l$ avoid $A'$, and hence
$$ |\{ (a_1,a_2) \in A' \times A': a_1+a_2 \in H_l \}| = o(|A'|^2).$$
By the triangle inequality, we conclude that
$$ |\{ (a_1,a_2) \in A' \times A': a_1+a_2 \in A' \}| \gg |A'|^2.$$
By the Cauchy-Schwarz inequality, this implies that
$$ |\{ (a_1,a_2,a_3,a_4) \in A' \times A' \times A' \times A': a_1+a_2 = a_3 + a_4 \}| \gg |A'|^3.$$
Applying the Balog-Szemer\'edi theorem (see e.g. \cite[Section 2.5]{tao-vu}, and transferring from standard analysis to nonstandard analysis in the usual fashion), we conclude that $A'$ has a commensurate subset $A''$ of bounded doubling.  

The next step is to invoke Freiman's theorem.  Define a \emph{nonstandard coset progression} to be a nonstandard finite subset of $G$ of the form
$$ H + P $$
where $H$ is a nonstandard finite subgroup, and $P$ is a nonstandard finite set of the form
$$ P = \{ a + n_1 v_1 + \dots + n_r v_r: 1 \leq n_i \leq N_i \hbox{ for all } i=1,\dots,r\}$$
for some standard non-negative integer $r$ (called the \emph{rank} of the progression), some base point $a \in G$ and generators $v_1,\dots,v_r$ of $G$, and some nonstandard natural numbers $N_1,\dots,N_r$.  
Applying Freiman's theorem in an arbitrary abelian group (see \cite{gr-4}) to the commensurate subset $A''$ of $A'$ of bounded doubling (again transferring from standard analysis to the nonstandard analysis), we can find a nonstandard coset progression $H+P$ that is commensurate with $A'$.  Among all such progressions, we select a nonstandard coset progression $H+P$ commensurate with $A'$ of minimal rank $r$; the existence of such an $H+P$ comes from the (external) principle of infinite descent.
If the coset progression is \emph{improper} in the sense that there is a relation of the form
\begin{equation}\label{never}
 n_1 v_1 + \dots + n_r v_r \in H 
\end{equation}
for some nonstandard $n_i = O(N_i)$ for $i=1,\dots,r$, not all zero, then we can contain the nonstandard coset progression $H+P$ in a commensurate nonstandard coset progression of lower rank; see e.g. \cite[Corollary 1.19]{tv-john}.  As this contradicts minimality, we conclude that $H+P$ is proper, in the sense that there are no non-trivial relations of the form \eqref{never}.  In a similar spirit, none of the dimensions $N_1,\dots,N_r$ are bounded, since otherwise we may remove the generators $v_1,\dots,v_r$ corresponding to the bounded dimensions and use the pigeonhole principle to locate a lower rank nonstandard coset progression that is still commensurate with $A'$, contradicting minimality.

Next, suppose there is a relation of the form
\begin{equation}\label{nanv}
 na + n_1 v_1 + \dots + n_r v_r \in H
\end{equation}
for some standard integer $n$ and some nonstandard $n_i = O(N_i)$ for $i=1,\dots,r$, with $n,n_1,\dots,n_r$ not all zero.  As $H+P$ is proper, $n$ is non-zero. Then by \cite[Corollary 1.19]{tv-john} again (placing $H+P$ inside a symmetric improper coset progression with generators $a,v_1,\dots,v_r$), we may contain $H+P$ in a commensurate coset progression of the same rank as $H+P$, with the additional property that the base point $a$ is of the form 
\begin{equation}\label{nvr}
a = n_1 v_1 + \dots + n_r v_r 
\end{equation}
for some nonstandard integers $n_i = O(N_i)$.  Thus we may assume either that there is no relation of the form \eqref{nanv}, or else that the base point $a$ has the form \eqref{nvr}.

Suppose first that the former case occurs, that is to say there is no relation of the form \eqref{nanv}.  In particular, $H+P$ has the wandering property.  If any of the $H_i$ occupy $l(H+P)$, then by covering $l(H+P)$ by a bounded number of translates of $H+P$ we see that some coset of $H_i$ occupies $H+P$, and a simple averaging argument using the commensurability of $H+P$ and $A'$ then shows that some coset of $H_i$ occupies $A'$, contradicting Theorem \ref{main-2}(ii).  Thus we see that $A \backslash A'$ does not occupy any of the $l(H+P)$; in particular, since $l(H+P)$ has comparable size to $H+P$ and hence to $A'$, we have
$$ |(A \backslash A') \cap l(H+P)| = o(|A'|).$$
Applying Lemma \ref{tf}, we conclude that $A'$ is avoided by $H+P$, a contradiction. Thus we may assume that there is a relation of the form \eqref{nanv}, and hence by the preceding discussion we may assume that the base point $a$ is of the form \eqref{nvr}.

For any standard real numbers $0 < \alpha_i \leq \beta_i$ and signs $\epsilon_i \in \{-1,1\}$, consider the set
$$ P' =  \{ n_1 v_1 + \dots + n_r v_r:  \alpha_i N_i \leq \epsilon_i n_i \leq \beta_i N_i\ \forall 1 \leq i \leq r \}.$$
One observes (from the properness of $H+P$) that $H+P'$ has the wandering property.  By the preceding arguments, we know that 
$$ |(A \backslash A') \cap l(H+P')| = o(|A'|).$$
Applying Lemma \ref{tf} again, we conclude that $A$ is avoided by all of the $H+P'$.  If the rank $r$ is positive, we can use the unboundedness of the dimensions $N_1,\dots,N_r$ as well as the relation \eqref{nvr} to cover $H+P$ by a bounded number of sets of the above form $H+P'$, plus an error of cardinality at most $\eps |H+P|$ for any given standard $\eps>0$.  Sending $\eps$ to zero (and recalling that $H+P$ and $A'$ are of comparable size), we would conclude that $A$ is avoided by $H+P$, a contradiction.  We conclude that the rank $r$ must vanish, which by \eqref{nanv} means that $a=0$, and so $H+P$ is simply $H$.

To summarise the progress so far, we have located a nonstandard finite subgroup $H$ of $G$ that is commensurate with $A'$, so in particular $A$ occupies $H$.  Since $A'$ is large, $H$ is also.  Since no coset of the $H_1,\dots,H_m$ occupies $A'$, none of the $H_i$ occupy $H$; conversely, since $A'$ avoids all the cosets of $H_i$, $H$ also avoids $H_i$.  Thus $H$ is transverse to each of the $H_1,\dots,H_m$.

As discussed in the introduction, the group $H$ may be the ``incorrect'' group to use to close the induction, and one may have to replace $H$ with a larger commensurate group.  As a starting point, we have

\begin{proposition}\label{fc}  The group $H$ is a subgroup of a commensurate group $H_{m+1}$, with the property that $A' \backslash H_{m+1}$ avoids all the cosets of $H_{m+1}$.
\end{proposition}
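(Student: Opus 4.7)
The plan is to take $H_{m+1}$ to be the subgroup of $G$ generated by $H$ together with the collection of $H$-cosets that $A'$ occupies. Concretely, let $X \subseteq G/H$ be the set of cosets $x + H$ for which $A'$ occupies $x + H$. Note that $0 + H \in X$ since $A'$ and $H$ are commensurate, and by the transversality of each $H_l$ to $H$ for $1 \leq l \leq m$, $|(A \setminus A') \cap (x + H)| \leq \sum_{l=1}^{m} |H_l \cap (x+H)| = o(|H|)$ (a bounded sum of $o(|H|)$ terms, using $A \setminus A' \subseteq H_1 \cup \dots \cup H_m$), so $A$ occupies $x + H$ if and only if $A'$ does. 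I would next show that every $x + H \in X$ has bounded order in $G/H$ by applying Lemma \ref{tf} with $B = x + H$ and the large subset $A'$ of $A$: if $x + H$ had infinite or unbounded order then $B$ would have the wandering property, and the hypothesis $|(A \setminus A') \cap lB| = o(|A'|)$ of the lemma follows from transversality of the $H_l$ to the coset $lB = lx+H$ (together with $|A'| \gg |H|$), so $B$ would avoid $A'$, contradicting $x+H \in X$.

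The main sub-step, and the main obstacle of the plan, is to show that $X$ is externally bounded in cardinality. Granting this, the subgroup $\langle X \rangle \leq G/H$, being generated by boundedly many elements of bounded order in an abelian group, has standard finite size, and $H_{m+1} :=$ preimage of $\langle X \rangle$ in $G$ is then a nonstandard finite subgroup of $G$ containing $H$ with $[H_{m+1}:H] = |\langle X \rangle|$ bounded, hence commensurate with $H$. To bound $|X|$, one may exploit the additive structure of $A'$: the Balog--Szemer\'edi step performed earlier in the proof produces a commensurate subset of $A'$ of bounded doubling, from which Pl\"unnecke--Ruzsa-type bounds on $|A'+A'|$ combined with a Kneser-type lower bound on $(A' \cap (x_1+H)) + (A' \cap (x_2+H))$ within the coset $x_1+x_2+H$ force the coset sumset $X + X$ inside $G/H$ to be bounded, and hence so is $X$ (as $0 \in X$ gives $X \subseteq X+X$).

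Once $H_{m+1}$ is in place, the avoiding property is a direct calculation: for any $y \notin H_{m+1}$, the coset $y + H_{m+1}$ decomposes into the $[H_{m+1}:H]$ many $H$-cosets forming the coset $(y+H) + \langle X\rangle$ of $\langle X\rangle$ in $G/H$. Since $y + H \notin \langle X\rangle$, none of these $H$-cosets lies in $X$, so $|A' \cap (x+H)| = o(|H|)$ on each piece, and summing over the bounded union yields $|(A' \setminus H_{m+1}) \cap (y+H_{m+1})| = |A' \cap (y+H_{m+1})| = o(|H_{m+1}|)$, as required.
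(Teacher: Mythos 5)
Your overall architecture is the same as the paper's: identify the $H$-cosets occupied by $A'$, show each has bounded order in $G/H$ via Lemma \ref{tf} (this part of your argument is correct and is exactly the paper's), generate $H_{m+1}$ from them, and deduce the avoidance property by decomposing each $H_{m+1}$-coset into boundedly many $H$-cosets. But the step you yourself flag as the main obstacle --- showing that the collection $X$ of occupied cosets is externally finite --- is where your argument has a genuine gap. The route through Balog--Szemer\'edi, Pl\"unnecke--Ruzsa and Kneser does not work, for two reasons. First, Balog--Szemer\'edi only produces a \emph{commensurate subset} $A''$ of $A'$ with bounded doubling; $A'$ itself need not have bounded doubling, and Pl\"unnecke--Ruzsa cannot be applied to $A'+A'$. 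Worse, $A''$ could be entirely contained in the single coset $H$ and still be commensurate with $A'$, so its doubling says nothing about the other cosets that $A'$ occupies. Second, and more fundamentally, the occupation constants are not uniform: ``$A'$ occupies $x_n+H$'' means $|A' \cap (x_n+H)| \geq c_n |H|$ for some standard $c_n>0$ that may depend on $n$, so a scenario with infinitely many occupied cosets and $\sum_n c_n < \infty$ is perfectly consistent with $|A'| \ll |H|$ and with any sumset/counting inequality; no Kneser-type lower bound on coset sumsets can exclude it. What rules it out is the sum-avoiding hypothesis, not additive structure.

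The paper's fix is Lemma \ref{tc}: if $A'$ occupied an infinite sequence of distinct cosets $B_n = x_n + H$, these form a wandering sequence, and the hypothesis \eqref{trick} holds with $A'_n = A'$ because $A \setminus A'$ avoids every coset of $H$ (each $B_n + B_m$ is a single such coset) while $|A'| \gg |H|$; Lemma \ref{tc} then produces an $n$ with $B_n$ avoiding $A'$, a contradiction. You should replace your doubling argument for the finiteness of $X$ with this application of Lemma \ref{tc}; the remainder of your proof then goes through as written.
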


\begin{proof} By the preceding discussion, $H$ is large and commensurate to $A'$, and $A \backslash A'$ avoids all the cosets of $H$; in particular
$$ |(z+A') \cap (A \backslash A') \cap (x+y+H)| = o(|A'|)$$
for any $x,y,z \in G$.  By Lemma \ref{tc} (with $A'_n=A'$ and arguing by contradiction with $B_n$ set equal to various cosets of $H$), $A'$ is occupied by only a standardly finite number of cosets of $H$; equivalently, since $A'$ is of comparable size to $H$, it occupies at most finitely many cosets of $H$.  Let $x+H$ be one of these cosets.  If all the dilates $nx+H$ for $n$ a natural number are disjoint, then $x+H$ has the wandering property , and hence by Lemma \ref{tf}, $A'$ avoids $x+H$, a contradiction.  Thus all of the (standardly finitely mahy) cosets $x+H$ that $A'$ occupies has bounded torsion in $G/H$, in the sense that $nx+H = H$ for some standard $n$.  If we let $H_{m+1}$ be the group generated by these cosets, then $H_{m+1}$ thus contains $H$ as a commensurate subgroup (since $H_{m+1}/H$ is a finitely generated abelian group with all generators of finite order, and so $H_{m+1}/H$ is small).  Since $A' \backslash H_{m+1}$ avoids all the cosets of $H$, it also avoids all the cosets of the commensurate group $H_{m+1}$.  The claim follows.
\end{proof}

Note that as $H$ is large and transverse to $H_1,\dots,H_m$, the commensurate group $H_{m+1}$ is also large and transverse to $H_1,\dots,H_m$.  In particular, the $H_1,\dots,H_{m+1}$ are all large and pairwise transverse.  By the above proposition and inductive claim of Theorem \ref{main-2}(i), the set $A' \backslash H_{m+1} = A \backslash (H_1 \cup \dots \cup H_{m+1})$ avoids all the cosets of the $H_1,\dots,H_{m+1}$, but $A$ occupies each of the $H_1,\dots,H_{m+1}$.

We are almost done, except that we do not have the required claim of Theorem \ref{main-2}(ii) for $m+1$; that is to say, if $A' \backslash H_{m+1}$ is large, we have not established that $A' \backslash H_{m+1}$ avoids all the cosets of the $H_1,\dots,H_{m+1}$.  In the $m=0$ case, $A=A'$ is commensurate with $H_1$, and this claim follows from the already established property that $A' \backslash H_1$ avoids all the cosets of $H_1$. However, for general $m$, some additional argument (in the spirit of the proof of Theorem \ref{fc}) is needed, in which each of the $H_1,\dots,H_{m+1}$ is replaced with larger commensurate groups.  To run this argument, it is convenient to introduce some additional notation relating to this family $H_1,\dots,H_{m+1}$ of large, pairwise transverse groups.

\begin{definition}  For any $I \subset \{1,\dots,m+1\}$, write $H_I$ for the subgroup $H_I := \bigcap_{i \in I} H_i$ (this subgroup may be trivial), with the convention $H_\emptyset = G$.  If $d$ is a natural number, a \emph{subspace}\footnote{Since it is possible for $H_I$ to equal $H_J$ for distinct $I,J$, we should strictly speaking define a subspace to be a \emph{pair} $(x+H_I, I)$ rather than just $x+H_I$, otherwise concepts such as the codimension of the subspace would be undefined.  However, we shall abuse notation and use $x+H_I$ rather than $(x+H_I,I)$ to denote a subspace.} of $G$ of \emph{codimension} $d$ to be a coset $x+H_I$ of one of the $H_I$ with $|I|=d$ (note that no such subspaces exist if $d>m+1$).  If the codimension is one, we call the subspace a \emph{hyperplane}, that is to say a hyperplane is a coset $x+H_i$ of one of the $H_1,\dots,H_{m+1}$.   A subspace $x+H_I$ is said to be \emph{aperiodic} if  $nx+H_I$ is occupied by one of the $H_1,\dots,H_{m+1}$ for some natural number $n$, and \emph{periodic} otherwise.  Thus for instance $G = H_\emptyset$ (the unique subspace of codimension $0$) is aperiodic for vacuous reasons, and any aperiodic subspace of codimension at least one has the wandering property.  We say that a subspace $x+H_I$ is \emph{captured} by another subspace $y+H_J$ if $x+H_I \subset y+H_J$ and $I \supset J$.

A \emph{core} is a set of the form $C = H'_1 \cup \dots \cup H'_{m+1}$, where each $H'_i$ is a group containing $H_i$ as a commensurate subgroup.  If $C$ is a core, a \emph{$C$-residual set} is a set of the form $A \backslash C'$ where $C'$ is a core containing $C$.  A subspace $x+H_I$ is said to be \emph{$C$-involved} it has codimension at least one, and there exists a large $C$-residual set $A'$ that is occupied by $x+H_I$.  
\end{definition}

\begin{example}  Let $H_1$ be a nonstandard finite subgroup of $G$, and let $x$ be an element of $G \backslash H_1$ such that $2x \in H_1$.  Let $A$ be the union of $H_1$ and a large subset $A'$ of $x+H_1$.  Then $A'$ is $H_1$-residual and $x+H_1$ is an $H_1$-involved hyperplane.  However, if one replaces the core $H_1$ with the larger core $H_1+\{0,x\}$, then there are no $H_1+\{0,x\}$-involved subspaces.  More generally, the basic strategy of the arguments below is to keep collecting involved subspaces into larger and larger cores until we find a core so large that no further involved subspaces appear.
\end{example}
\begin{flushleft}

\end{flushleft}
We then have the following more complicated variant of Proposition \ref{fc}.

\begin{proposition}\label{fc-complex} Let $1 \leq d \leq m+2$.
\begin{itemize}
\item[(i)]  Let $0 \leq d' \leq d$, and let $y + H_J$ be an aperiodic subspace of codimension $d'$.  Then there exists a core $C$ and an (externally) finite family ${\mathcal F}_{d,d',y+H_J}$ of subspaces of codimension $d'+1$, such that every $C$-involved subspace $x+H_I$ of codimension $d$ that is captured by $y+H_J$, is also captured by one of the subspaces in ${\mathcal F}_{d,d',y+H_J}$.  In particular, if $d'=d$, then ${\mathcal F}_{d,d',y+H_J}$ is empty as $x+H_I$ cannot be captured by a codimension $d+1$ subspace.
\item[(ii)]  There exists a core $C = H'_1 \cup \dots \cup H'_{k+1}$, with the property that there are no $C$-involved subspaces of codimension $d$.
\end{itemize}
\end{proposition}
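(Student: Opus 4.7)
The plan is to prove (i) by downward induction on $d'$ starting from $d' = d$, and then to derive (ii) from (i) via an iterative core-enlargement scheme extending the argument of Proposition~\ref{fc}.

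For the base case $d' = d$ of (i), the family ${\mathcal F}_{d,d,y+H_J}$ is empty, so the claim reduces to producing a core $C$ for which the given aperiodic codim-$d$ subspace $y + H_J$ is not $C$-involved. Since $y + H_J$ has codimension at least one and is aperiodic, it has the wandering property, so I would mimic Proposition~\ref{fc} and apply Lemma~\ref{tf} with $B = y + H_J$ to a hypothetical large $C$-residual set $A'$ occupied by $y + H_J$. Verifying hypothesis \eqref{smal} amounts to showing $|H'_i \cap l(y + H_J)| = o(|A'|)$ for each of the commensurate groups $H'_i$ comprising the enclosing core $C' \supset C$: when $i \in J$, aperiodicity (transferred from $H_i$ to $H'_i$ by using that the index $[H'_i : H_i]$ is externally finite) forces $ly \notin H'_i$ for every standard $l > 0$, killing the intersection; when $i \notin J$, aperiodicity forbids $H_i$ from occupying any $ny + H_J$, which combined with the pairwise transversality of $H_1, \dots, H_{m+1}$ produces the desired bound. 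The conclusion of Lemma~\ref{tf} then yields avoidance, contradicting the assumed occupation.

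For the inductive step at $d' < d$, one is given an aperiodic codim-$d'$ subspace $y + H_J$ and assumes the claim for $d'+1$. Codimension-$(d'+1)$ subspaces captured by $y + H_J$ are of the form $z + H_{J \cup \{i\}}$ with $z \in y + H_J$ and $i \notin J$, and the task is to show that only finitely many of these can contain a $C$-involved codim-$d$ subspace. I plan to argue by contradiction: if infinitely many such cosets carry involved codim-$d$ subspaces, then after thinning and fixing the index $i$, the sequence $z_n + H_{J \cup \{i\}}$ forms an wandering sequence, and Lemma~\ref{tc} applied to the large $C$-residual sets $A'_n$ witnessing involvement of the corresponding codim-$d$ subspaces delivers an avoidance conclusion contradicting occupation. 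The finitely many surviving codim-$(d'+1)$ subspaces are then either fed into the inductive hypothesis (if aperiodic) or absorbed by enlarging a group in the core (if periodic), and the finite union of the resulting cores yields the desired $C$ and ${\mathcal F}_{d,d',y+H_J}$.

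To derive (ii), apply (i) with $d' = 0$ and $y + H_J = G = H_\emptyset$ (aperiodic for vacuous reasons) to obtain a core $C_0$ and a finite family of codim-1 subspaces capturing every $C_0$-involved codim-$d$ subspace. I then walk down the codimension tree: for each subspace $y + H_J$ on the current frontier, if it is periodic (so $ny \in H_j$ for some standard $n$ and some $j \in J$), enlarge $H_j$ in the core to the cyclic extension containing $y$, which absorbs $y + H_J$ and hence every codim-$d$ subspace captured by it into the core; if aperiodic, apply (i) to replace $y + H_J$ on the frontier by a finite family of codim-$(d'+1)$ subspaces, using that non-involvement is preserved under enlarging the core to combine the resulting cores. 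Since codimension strictly increases and the base case at $d' = d$ delivers an empty family, the recursion terminates in at most $d$ levels, and the finite union of all cores produced is a single core $C$ with no $C$-involved codim-$d$ subspaces.

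The main obstacle is verifying the smallness condition \eqref{smal} in Lemma~\ref{tf} once one leaves the setting of Proposition~\ref{fc}. When $|J| > 1$, the subgroup $H_J$ can be much smaller than any single $H_j$ for $j \in J$, while pairwise transversality of the $H_i$ directly controls $|H_i \cap H_j|$ only against $|H_j|$ rather than against $|H_J|$; consequently, bounding $|H_i \cap H_J|$ relative to $|A'|$ requires combining the occupation hypothesis (which yields $|H_J| \ll |A'|$) with aperiodicity (which rules out $H_i$-occupation of $H_J$) in a careful way. Propagating such estimates through the recursion while tracking multiple layers of commensurate enlargements $H_i \subset H'_i$ of wildly differing sizes is the bookkeeping that motivated the nonstandard framework.
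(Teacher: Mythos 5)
Your proposal and the paper differ in a structurally important way, and the difference is exactly where your proof breaks down.  You propose to prove part (i) by downward induction on $d'$ with $d$ held fixed, and then to derive (ii) from (i).  The paper instead proves (i) and (ii) simultaneously by downward induction on $d$; the reason is that the proof of (i) for codimension $d$ needs statement (ii) for codimension $d+1$ as an input.  Concretely, when the paper builds the wandering sequence $x_n + H_I$ and verifies hypothesis \eqref{trick} for Lemma \ref{tc}, the indices $j \notin I$ are handled by invoking ``there are no $C$-involved subspaces of codimension $d+1$'': since cosets of $H_{I \cup \{j\}}$ (codimension $d+1$) cannot occupy the residual sets $A'_{n'}$, the commensurable enlargements $H''_j \cap H_I$ avoid them as well.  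Your inductive scheme never has this at hand, since at no point does it deliver statement (ii) for a larger codimension.

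You explicitly flag the resulting difficulty at the end (``When $|J| > 1$, the subgroup $H_J$ can be much smaller than any single $H_j$...''), but you describe it as a matter of careful bookkeeping rather than resolving it.  It is not bookkeeping.  Pairwise transversality of the $H_i$ says nothing about $|H_{J \cup \{i\}}|$ relative to $|H_J|$: in $G = (\Z/p\Z)^2$ with $H_1 = \langle(1,0)\rangle$, $H_2 = \langle(0,1)\rangle$, $H_3 = \langle(1,1)\rangle$, one has $H_{\{1,2\}} = H_{\{1,2,3\}} = \{0\}$, so the triple intersection is not small relative to the double one.  So the bound $|H'_i \cap (ny + H_J)| = o(|H_J|)$ that you need to verify \eqref{smal} in your base case $d' = d$ can fail outright; the cosets of $H_i$ inside the enlargement $H'_i$ can pick up a full copy of $H_{J \cup \{i\}}$, which need not be negligible against $H_J$.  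The paper's simultaneous induction is precisely what supplies the missing control: it replaces transversality estimates on intersections of the $H_i$ by the non-involvement statement at codimension $d+1$, which is a statement about the residual sets rather than about the groups.

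There is a second, more local problem in your inductive step.  You propose to apply Lemma \ref{tc} to the wandering sequence $B_n = z_n + H_{J\cup\{i\}}$ together with residual sets $A'_n$ that are occupied by the codimension-$d$ subspaces $x_n + H_{I_n} \subset z_n + H_{J\cup\{i\}}$.  But occupation of $A'_n$ by the smaller coset $x_n + H_{I_n}$ does not imply occupation by the larger coset $z_n + H_{J\cup\{i\}}$: the quantity $|A'_n \cap (z_n + H_{J\cup\{i\}})|$ is only bounded below by $\gg |H_{I_n}|$, not by $\gg |H_{J\cup\{i\}}|$, and these can differ by an unbounded factor.  So the ``avoidance contradicting occupation'' you want from the lemma is not forthcoming.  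The paper sidesteps this by applying the lemma directly to the codimension-$d$ subspaces $x_n + H_I$ (with a fixed index set $I$, $|I| = d$), which are exactly the sets that occupy the $A'_n$.  Your derivation of (ii) from (i) (walking down the codimension tree, absorbing periodic subspaces into the core, splitting aperiodic ones via (i)) does match the paper and is fine as stated.
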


\begin{proof}  We establish (i) and (ii) together by downwards induction on $d$.  The claims (i) and (ii) are vacuously true for $d=m+2$ as there are no subspaces of that codimension, so suppose inductively that $1 \leq d \leq m+1$, and that the claims (i), (ii) are already established for $d+1$.  Thus, by (ii), we have a core $C = H'_1 \cup \dots \cup H'_{m+1}$ such that there are no $C$-involved subspaces of codimension $d+1$.

Next, we prove (i).  Let $I \subset \{1,\dots,m+1\}$ have cardinality $d$.  By taking unions in $I$, it will suffice to locate a finite family ${\mathcal F}_{d,d',y+H_J,I}$ of subspaces of codimension $d'+1$, such that every $C$-involved subspace $x+H_I$ captured by $y+H_J$ is also captured by one of the subspaces in ${\mathcal F}_{d,d',y+H_J,I}$.

Let $x+H_I$ be a $C$-involved subspace that is captured by $y+H_J$, thus there is a large $C$-residual set $\tilde A := A \backslash C''$ that is occupied by $x+H_I$, where $C'' := H''_1 \cup \dots \cup H''_m$ is a core containing $C$.  We first claim that $x+H_I$ is periodic.  To see this, suppose for contradiction that $x+H_I$ is aperiodic, then it has the wandering property.  Furthermore, for any natural number $n$, $nx+H_I$ is avoided by $H_i$ and hence by $H''_i$ for each $1 \leq i \leq k+1$.   In particular
$$ |(A \backslash \tilde A) \cap n(x+H_I)| = 0.$$
Applying Lemma \ref{tf}, we conclude that $x+H_I$ avoids $\tilde A$, a contradiction.  Thus $x+H_I$ is periodic.

Suppose for contradiction that the required claim fails; that is to say, we suppose

\begin{assumption}\label{as} Given any (externally) finite family ${\mathcal F}$ of subspaces of codimension $d'+1$, there exists an involved subspace $x+H_I$ captured by $y+H_J$ that is not captured by any subspace in ${\mathcal F}$.  
\end{assumption}

Let $x_1+H_I,\dots,x_n+H_I$ be some finite sequence of subspaces captured by $y+H_J$.  Thus to each $x_{n'}+H_I$, $1 \leq n' \leq n$, there is a large residual set $A'_i := A \backslash C''_{n'}$ which is occupied by $x_{n'}+H_I$, where $C''_{n'} := H''_{1,n'} \cup \dots \cup H''_{m+1,n'}$ is a core containing $C$.  We claim that we can find an $C$-involved subspace $x_{n+1}+H_I$ captured by $y+H_J$ and distinct from each of the $x_{n'}+H_I$ for $1 \leq n' \leq n$, and such that
$$ |(z + A'_{n'}) \cap (A \backslash A'_{n'}) \cap (x_{n'}+x_{n+1}+H_I)| = o( |A'_{n'}| )$$
for each $1 \leq n' \leq n$ and $z \in G$.  To see this, observe first that for any $j \in J$, the set $x_n'+x_{n+1}+H_I = x_{n'}+H_J + x_{n+1}+H_J$ lies in $y+H_J+y+H_J = 2y + H_J$, which avoids $H''_{j,n'}$ since $y+H_J$ is aperiodic.  Thus
$$ |(z + A'_{n'}) \cap H''_j \cap (x_{n'}+x_{n+1}+H_I)| = 0$$
for such $j$.  Next, suppose that $1 \leq j \leq m+1$ with $j \not \in I$.  Then by the hypothesis (ii), none of the cosets of $H_j \cap H_I = H_{I \cup \{j\}}$ occupy the $C$-residual set $A'_{n'}$, and so by the triangle inequality none of the cosets of $H''_j \cap H_I$ do either.  By translation, none of the cosets of $H''_j \cap H_I$ occupy $z+A'_{n'}$, and in particular
$$ |(z + A'_{n'}) \cap H''_{n'} \cap (x_{n'}+x_{n+1}+H_I)| = o(|A'_{n'}|).$$
Next, suppose that $j \in I \backslash J$, so that $H_I$ is a subgroup of $H''_j \cap H_J$.  Then as long as $x_{n+1}+H_I$ does not lie in the set $(-x_{n'} + H''_j) \cap (y + H_J)$, which is the union of finitely many subspaces of codimension $d'+1$, we have
$$ |(z \cap A'_{n'}) \cap H''_j \cap (x_{n'}+x_{n+1}+H_I)| = 0$$
for this value of $j$.  Finally, as $x_{n'}+H_I$ is periodic, it must lie in some periodic hyperplane $w_{n'} + H_{i_{n'}}$; as $y+H_J$ is aperiodic, $i_{n'}$ must lie outside of $J$.  As long as $x_{n+1}+H_I$ does not lie in $(w_{n'} + H_{i_{n'}}) \cap (y+H_J)$, which is a subspace of codimension $d'+1$, it must be distinct from $x_{n'}+H_I$.
Putting all this together, we have identified a finite collection of subspaces of codimension $d'+1$, such that for any $C$-involved $x_{n+1}+H_I$ outside of these subspaces, $x_{n+1}+H_I$ is distinct from the $x_{n'}+H_I$ for $n'=1,\dots,n$, and that
$$ |(z + A'_{n'}) \cap (A \backslash A'_{n'}) \cap (x_{n'}+x_{n+1}+H_I)| = o( |A'_{n'}| )$$
for each $1 \leq n' \leq n$ and $z \in G$.  The existence of such a $x_{n+1}+H_I$ follows from Assumption \eqref{as}, giving the required claim.

Iterating the claim, we can find an infinite sequence $x_n+H_I$ of disjoint $C$-involved subspaces captured by $y+H_J$, such that the associated $C$-residual sets $A'_n = A \backslash C_n$ occupied by $x_n+H_i$ obey the bound
$$ |(z + A'_{n'}) \cap (A \backslash A'_{n'}) \cap (x_{n'}+x_n+H_I)| = o( |A'_{n'}| )$$
for all $1 \leq n' < n$ and $z \in G$.  Applying Lemma \ref{tf} to the wandering sequence $B_n := x_n+H_I$, there must exist some $n$ such that $A'_n$ is avoided by $x_n+H_i$, a contradiction.  This establishes (i).

Since the sum $H'_i + H''_i$ of two groups $H'_i, H''_i$ that both contain $H_i$ as a commensurate subgroup is again a group containing $H_i$ as a commensurate subgroup, we see that for any two cores $C', C''$, there exists another core $C'''$ that contains both (in particular, any $C'''$-involved subspace is also $C'$-involved and $C''$-involved).  If we iterate (i) starting from $d'=0$ and $y+H_J = G$, using (i) to split up any aperiodic subspace encountered into a finite number of higher codimension pieces, and combining all cores encountered together by the preceding observation, we conclude that there exists a core $C$ and a finite family ${\mathcal F}_d$ of \emph{periodic} subspaces of codimensions ranging between $1$ and $d$, such that every $C$-involved subspace $x+H_I$ of codimension $d$ is captured by one of the subspaces in ${\mathcal F}_d$.

Note that every periodic subspace is contained in a periodic hyperplane $x + H_i$, which is in turn contained in a core (by setting $H'_i$ to be the group generated by $H_i$ and $x$, and setting all other $H'_j$ equal to $H_j$).  Combining all the cores together as before, we may thus find a core $C'$ that contains $C$ as well as every subspace in ${\mathcal F}_d$.  Then there cannot be any $C'$-involved subspace $x+H_I$ of codimension $d$ (as such subspaces are both $C$-involved and lie outside of every subspace in ${\mathcal F}_d$), giving the claim (ii) as required.
\end{proof}

Applying the $d=1$ case of Proposition \ref{fc-complex}(ii), we can locate a core $C = H'_1 \cup \dots \cup H'_{m+1}$ with the property that there are no $C$-involved hyperplanes.  Thus, if we set $A'' := A \backslash (H'_1 \cup \dots \cup H'_{m+1})$ and assume $A''$ is large, then for any $i=1,\dots,m$, all cosets of $H_i$ avoid $A''$, and hence all cosets of the commensurable $H'_i$ avoid $A''$.  Also, since $A'$ already avoided all cosets of $H_i$, the smaller set $A''$ also avoids all cosets of the commensurable group $H'_i$.  Finally, since $H_1,\dots,H_{m+1}$ were large and pairwise transverse, the groups $H'_1,\dots,H'_{m+1}$ are also.  This (finally) gives the $m+1$ case of Theorem \ref{main-2}, closing the induction and then giving Theorem \ref{main-nonst} and Theorem \ref{main} as corollaries.

\begin{remark}  A significant portion of the above arguments seem to be extendible\footnote{We thank Ben Green for this observation} to the non-abelian setting, for instance by using the results in \cite{bgt} as a substitute for those in \cite{gr-4}.  In particular, it is reasonable to conjecture that if $A$ is a finite subset of a (not necessarily abelian) group $G = (G,\cdot)$, with the property that for any distinct $a_1,\dots,a_{k+1} \in A$, there exist distinct $i,j \in \{1,\dots,k+1\}$ such that $a_i a_j \in A$, then $A$ should be commensurate with a finite subgroup $H$ of $G$ in the sense that $|A \cap H| \geq c(k) |A|, c(k) |H|$ for some $c(k)>0$ depending only on $k$.  However there appear to be some technical difficulties in transferring some portions of the argument; for instance, the assertion that any translate of a (symmetric) coset progression can be contained in a slightly larger (symmetric) coset progression does not hold for the nonabelian analogue of a coset progression, namely a coset nilprogression, and this leads to some complications which we have not been able to resolve.  We will not pursue these issues further here.
\end{remark}

\section{Groups of order not divisible by small primes}\label{odd-sec}

We now prove Theorem \ref{odd}.  Fix $k$ and $\eps$; we may assume that $k \geq 3$ as the claim follows from Proposition \ref{easy} otherwise.  Let $\eps_1>0$ be a sufficiently small quantity depending on $k,\eps$ to be chosen later.  Let $C_0$ be a quantity to be chosen later that is sufficiently large depending on $\eps,\eps',k$.  Here we will revert back to standard analysis, avoiding all nonstandard notation.  In particular, we now use the terminology $X \ll_{k,\eps} Y$, $Y \gg_{k,\eps} X$, or $X = O_{k,\eps}(Y)$ to denote a bound of the form $|X| \leq C(k,\eps) Y$, where $C(k,\eps)$ is a quantity that depends on $k,\eps$ but not on $C_0,\eps'$.  Similarly for $X \ll_k Y$, $X \gg_k Y$, etc..

Let $A$ be a subset of a finite group $G$ with $\phi(A) < k$, and with $|G|$ not divisible by any prime less than $C_0$.  By applying Theorem \ref{main}, and discarding any group $H_i$ of order less than $C_0^{1/2}$, we can find subgroups $H_1,\dots,H_m$ of $G$ with $0 \leq m < k$ such that
$$
|A \cap H_i| \gg_{k} |H_i|$$
and
$$ |H_i| \geq C_0^{1/2}$$
for all $i=1,\dots,m$, and such that
$$ 
|A \backslash (H_1 \cup \dots \cup H_m)| \ll_{k} C_0^{1/2}.$$
Furthermore if $m=k-1$ we can take $A \backslash (H_1 \cup \dots \cup H_m)$ to be empty.
If we have $|A \cap H_i| > (1-\eps) |H_i|$ for all $i$ then we are done, so suppose there is an $i=1,\dots,m$ such that $|A \cap H_i| \leq (1-\eps) |H_i|$.  We abbreviate $H_i$ as $H$, thus
\begin{equation}\label{ah}
|H \backslash A|, |A \cap H| \gg_{k,\eps} |H| \geq C_0^{1/2}.
\end{equation}
As the order of $H$ divides the order of $G$, we see that $|H|$ is not divisible by any prime less than $C_0$.
Meanwhile, the property $\phi(A) < k$ implies that $\phi(A \cap  H) < k$.  Thus, the only variables $x_1,\dots,x_k \in A \cap H$ and $x_{ij} \in H \backslash A$ for $1 \leq i < j \leq k$ that obey the system of $\binom{k}{2}$ linear equations
$$ x_i + x_j - x_{ij} = 0$$
for all $1 \leq i < j \leq k$, are those for which two of the $x_i,x_j$ are equal; thus the number of solutions to this system with the indicated constraints is at most $\binom{k}{2} |A \cap H|^{k-1}$, which is smaller than $\delta |H|^k$ for any $\delta>0$ if $C_0$ is sufficiently large depending on $k,\delta$.

One can write the linear system more compactly as
$$ M x = 0$$
where $M$ is a certain $\binom{k}{2} \times (k+\binom{k}{2})$ matrix with integer entries, and $x$ is a $k+\binom{k}{2}$-dimensional vector whose entries are $x_1,\dots,x_k$ and $x_{ij}$ for $1 \leq i < j \leq k$.  For instance, for $k=3$ we would have the system
\begin{equation}\label{match}
\begin{pmatrix}
1 & 1 & 0 & -1 & 0 & 0 \\
1 & 0 & 1 & 0 & -1 & 0 \\
0 & 1 & 1 & 0 & 0 & -1
\end{pmatrix}
\begin{pmatrix} x_1 \\ x_2 \\ x_3 \\ x_{12} \\ x_{13} \\ x_{23} \end{pmatrix} = \begin{pmatrix} 0 \\ 0 \\ 0 \\ 0 \\ 0 \\ 0 \end{pmatrix}.
\end{equation}
We apply the arithmetic removal lemma from\footnote{See also \cite{csv} for another proof of this theorem.} \cite[Theorem 1]{vena} (assuming $C_0$ sufficiently large depending on $k,\eps_1'$) to conclude that we can remove at most $\eps_1 |H|$ elements from $A \cap H$ to form a new set $A'$, and at most $\eps_1 |H|$ elements from $H \backslash A$ to form a new set $B'$, with the property that the system $Mx=0$ has \emph{no} solutions with $x_1,\dots,x_k \in A'$ and $x_{ij} \in B'$ for $1 \leq i < j \leq k$.  In particular, setting $x_1=\dots=x_k=a$ and $x_{ij} = 2a$, we conclude that there are no $a \in A'$ for which $2a \in B'$.  As $|H|$ is odd, the map $a \mapsto 2a$ is a bijection on $H$, and we conclude that for all but at most $2\eps_1 |H|$ elements $a$ of $A \cap H$, we have $2a \in A \cap H$, and similarly with $H \backslash A$.  Thus
\begin{equation}\label{ahah2}
\frac{1}{|H|} \sum_{x \in H} |1_{A \cap H}(x) - 1_{A \cap H}(2x)| \leq 4 \eps_1,
\end{equation}
where $1_{A \cap H} \colon H \to \{0,1\}$ is the indicator function of $A \cap H$.

We now introduce the Fourier transform $\hat 1_{A \cap H} \colon \hat H \to \C$ of $1_{A \cap H}$, where the Pontryagin dual group $\hat H$ is defined as the group of all homomorphisms $\xi \colon x \mapsto \xi \cdot x$ from $H$ to $\R/\Z$, and the Fourier transform $\hat f \colon H \to \C$ of any function $f \colon H \to \C$ is given by the formula
$$ \hat f(\xi) := \frac{1}{|H|} \sum_{x \in H} f(x) e^{-2\pi i \xi \cdot x}.
$$
From \eqref{ahah2} and the triangle inequality we see that
$$ |\hat 1_{A \cap H}(2\xi) - \hat 1_{A \cap H}(\xi)| \leq 4 \eps_1$$
for any $\xi \in \hat H$, and hence on iterating
\begin{equation}\label{soj}
 |\hat 1_{A \cap H}(2^j\xi) - \hat 1_{A \cap H}(\xi)| \leq 4 j\eps_1
\end{equation}
for any natural number $j$.

If $\xi$ is a non-zero element of $\hat H$ such that $|\hat 1_{A \cap H}(\xi)| \geq \eps_1^{1/4}$, then from \eqref{soj} we see that
$$ |\hat 1_{A \cap H}(2^j \xi)| \geq \frac{1}{2} \eps_1^{1/4} $$
for any natural number $j \leq \eps_1^{-3/4}/8$.  But since $\hat H$ is isomorphic to $H$ (see e.g. \cite[Chapter 4]{tao-vu}), the order of $\xi$ is not divisible by any prime less than $C_0$.  Thus, if $C_0$ is sufficiently large depending on $\eps_1$, then all the $2^j \xi$ with $j \leq \eps_1^{-1/4}/8$ are distinct.  In particular from the Plancherel identity we have
$$ \sum_{j \leq \eps_1^{-3/4}/8} |\hat 1_{A \cap H}(2^j \xi)|^2 \leq 1.$$
This contradicts the previous bound if $\eps_1$ is small enough.  We conclude that $A \cap H$ is Fourier-uniform in the sense that
\begin{equation}\label{ah-1}
 \sup_{\xi \in \hat H \backslash \{0\}} |\hat 1_{A \cap H}(\xi)| < \eps_1^{1/4}.
\end{equation}
Since the Fourier coefficients of $1_{H \backslash A}$ at non-zero frequencies $\xi$ are the negative of those of $1_{A \cap H}$, we also have
\begin{equation}\label{ah-2}
 \sup_{\xi \in \hat H \backslash \{0\}} |\hat 1_{H \backslash A}(\xi)| < \eps_1^{1/4}.
\end{equation}

Next, we observe that the linear system $Mx=0$ is of ``complexity one'' in the sense of \cite{gt-primes}, which roughly speaking means that the solution count to this system is controlled by the size of Fourier coefficients; this observation was already implicit in the work of Balog \cite{balog}.  More precisely, we have

\begin{proposition}[Complexity one]\label{one}  For functions $f_1,\dots,f_k \colon H \to \R$ and $f_{ij} \colon H \to \R$ for $1 \leq i < j \leq k$, define the $k + \binom{k}{2}$-linear form
$$ \Lambda(f_1,\dots,f_k, (f_{ij})_{1 \leq i < j \leq k}) := \frac{1}{|H|^k} \sum_{x_1,\dots,x_k \in H} \left(\prod_{i=1}^k f_i(x_i)\right) \left(\prod_{1 \leq i < j \leq k} f_{ij}(x_i+x_j)\right).$$
Suppose that the $f_1,\dots,f_k$ and $f_{ij}$ all take values in $[-1,1]$.  Then for any $1 \leq i \leq k$ we have
\begin{equation}\label{fo}
 |\Lambda(f_1,\dots,f_k, (f_{ij})_{1 \leq i < j \leq k})| \leq \sup_{\xi \in \hat H} |\hat f_i(\xi)|
\end{equation}
and similarly for any $1 \leq i < j \leq k$ we have
\begin{equation}\label{fo-2}
 |\Lambda(f_1,\dots,f_k, (f_{ij})_{1 \leq i < j \leq k})| \leq \sup_{\xi \in \hat H} |\hat f_{ij}(\xi)|
\end{equation}
\end{proposition}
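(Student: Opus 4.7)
The plan is to prove both bounds by the same Cauchy--Schwarz plus Plancherel template, exploiting the identity that the Fourier transform of an autocorrelation $h \mapsto \frac{1}{|H|}\sum_{v\in H} g(v)g(v+h)$ equals $\xi \mapsto |\hat g(\xi)|^2$. The strategy is to isolate the distinguished function on the right-hand side, apply Cauchy--Schwarz in every other variable (using the hypothesis $|f|\le 1$ pointwise to discard each factor not involving the distinguished variable), and then read off the claimed bound from a Fourier expansion of the resulting autocorrelations.

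For \eqref{fo}, relabel so that $i=1$ and write $\Lambda = \frac{1}{|H|^{k-1}} \sum_{x_2,\dots,x_k} G(x_2,\dots,x_k)\,\Psi(x_2,\dots,x_k)$, where $G := \prod_{j\ge 2} f_j(x_j) \prod_{2\le i<j} f_{ij}(x_i+x_j)$ satisfies $|G|\le 1$ pointwise, and $\Psi(x_2,\dots,x_k) := \frac{1}{|H|}\sum_{x_1} f_1(x_1)\prod_{j\ge 2} f_{1j}(x_1+x_j)$ carries all of the $x_1$-dependence. Cauchy--Schwarz in $(x_2,\dots,x_k)$ gives $|\Lambda|^2 \le \frac{1}{|H|^{k-1}}\sum_{x_2,\dots,x_k} |\Psi|^2$. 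Expanding $|\Psi|^2$ via two independent copies $x_1,x_1'$, setting $h := x_1'-x_1$, and substituting $v_j := x_1+x_j$ converts each $x_j$-average into the autocorrelation $\phi_j(h) := \frac{1}{|H|}\sum_v f_{1j}(v)f_{1j}(v+h)$ (independent of $x_1$), while the remaining $x_1$-average becomes $\Phi(h) := \frac{1}{|H|}\sum_{x_1} f_1(x_1)f_1(x_1+h)$. Applying $\hat\Phi(\xi)=|\hat f_1(\xi)|^2$ and $\hat\phi_j(\xi)=|\hat f_{1j}(\xi)|^2$ together with orthogonality of characters, the $h$-average becomes
$$ |\Lambda|^2 \le \sum_{\xi+\eta_2+\cdots+\eta_k=0} |\hat f_1(\xi)|^2 \prod_{j\ge 2} |\hat f_{1j}(\eta_j)|^2. $$
Pulling out $\sup_\xi |\hat f_1(\xi)|^2$ and invoking Parseval on the remaining $\eta_j$-sum (which is bounded by $\prod_{j\ge 2}\|f_{1j}\|_2^2 \le 1$) yields \eqref{fo}.

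For \eqref{fo-2}, relabel so that $(i,j) = (1,2)$, and isolate $f_{12}$ by reindexing via $y := x_1+x_2$, keeping $x_1$ as a free variable. The only factors depending on $y$ are $f_{12}(y)$, $f_2(y-x_1)$, and $f_{2j}(y-x_1+x_j)$ for $j\ge 3$; every other factor is bounded pointwise by $1$. Cauchy--Schwarz in $(x_1,x_3,\dots,x_k)$ then bounds $|\Lambda|^2$ by
$$ \frac{1}{|H|^{k-1}} \sum_{x_1,x_3,\dots,x_k} \Bigl| \frac{1}{|H|}\sum_y f_{12}(y) f_2(y-x_1) \prod_{j\ge 3} f_{2j}(y-x_1+x_j) \Bigr|^2. $$
Writing $z := y-x_1$ and expanding the square using two copies $z, z'$, the $x_1$-integral converts $f_{12}(z+x_1)f_{12}(z'+x_1)$ into the autocorrelation $\psi(z'-z)$ of $f_{12}$, each $x_j$-integral ($j\ge 3$) converts $f_{2j}(z+x_j)f_{2j}(z'+x_j)$ into the autocorrelation $\psi_j(z'-z)$ of $f_{2j}$, and setting $h := z'-z$ the remaining $z$-integral converts $f_2(z)f_2(z+h)$ into the autocorrelation $\Psi(h)$ of $f_2$. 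A Fourier expansion of $\frac{1}{|H|}\sum_h \psi(h)\Psi(h)\prod_{j\ge 3}\psi_j(h)$ using $\hat\psi = |\hat f_{12}|^2$, $\hat\Psi = |\hat f_2|^2$, $\hat\psi_j = |\hat f_{2j}|^2$ and orthogonality of characters, followed by pulling out $\sup_\xi |\hat f_{12}(\xi)|^2$ and applying Parseval to the residual frequency sum, yields \eqref{fo-2}.

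The main obstacle is purely the bookkeeping for \eqref{fo-2}: one must verify that after the substitution $y = x_1+x_2$ the function $f_{12}$ is the only factor depending on $y$ alone, that every other factor can be absorbed into the pointwise $\le 1$ bound used for Cauchy--Schwarz, and that the subsequent change $z = y - x_1$ decouples the $x_1$- and $x_j$-integrations into independent autocorrelations. Once this setup is correct, the Fourier computation reduces to essentially the same one used in \eqref{fo}.
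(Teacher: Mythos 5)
Your proof is correct, but it takes a genuinely different route from the paper's. The paper never squares $\Lambda$: for \eqref{fo-2} it fixes $x_3,\dots,x_k$, applies the triangle inequality over those choices, and observes that each fixed slice is a classical bilinear form $\frac{1}{|H|^2}\sum_{x_1,x_2} f(x_1)g(x_2)f_{12}(x_1+x_2)$, which Fourier inversion rewrites as $\sum_\xi \hat f(\xi)\hat g(\xi)\hat f_{12}(-\xi)$ and which is then bounded by $\sup_\xi|\hat f_{12}(\xi)|$ via Plancherel and Cauchy--Schwarz on the frequency side; for \eqref{fo} the same reduction is achieved after the change of variables $x_i = x_2 + h_i$ ($i \geq 3$), which turns every factor involving $x_1$ other than $f_1(x_1)$ into a function of $x_1+x_2$ so that the slice again has the form $f_1(x_1)g(x_2)h(x_1+x_2)$. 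You instead run the Gowers--Cauchy--Schwarz template: duplicate the distinguished variable by a single global Cauchy--Schwarz, collapse everything into an $h$-average of a product of $k$ autocorrelations, and evaluate that average as a constrained frequency sum $\sum_{\xi+\eta_2+\cdots+\eta_k=0}|\hat f_1(\xi)|^2\prod_j|\hat f_{1j}(\eta_j)|^2$, from which the bound follows by extracting the supremum and applying Parseval (your normalizations and the identity $\hat\Phi = |\hat f|^2$ all check out, and squaring costs nothing since the final bound square-roots cleanly). The paper's argument is shorter and avoids the bookkeeping you flag as the main obstacle, but it requires spotting the right change of variables in the \eqref{fo} case; your version is more mechanical and treats both inequalities by one uniform procedure, at the price of tracking the duplicated variable and the constrained frequency sum.
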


\begin{proof}  We begin with \eqref{fo-2}.  We shall just prove this claim for $ij=12$, as the general case follows from appropriate permutation of indices.  By the triangle inequality, it suffices to show that
$$
\frac{1}{|H|^2} \left|\sum_{x_1,x_2 \in H} \left(\prod_{i=1}^k f_i(x_i)\right) \left(\prod_{1 \leq i < j \leq k} f_{ij}(x_i+x_j)\right)\right| \leq \sup_{\xi \in \hat H} |\hat f_{12}(\xi)|
$$
for any choice of $x_3,\dots,x_k \in H$.  But once one fixes such choices, one can rewrite the left-hand side as
$$
\frac{1}{|H|^2} \left|\sum_{x_1,x_2 \in H} f(x_1) g(x_2) f_{12}(x_1+x_2)\right|$$
for some functions $f,g \colon H \to [-1,1]$ whose exact form is not important to us.  By the Fourier inversion formula, this can be written as
$$ |\sum_{\xi \in H} \hat f(\xi) \hat g(\xi) \hat f_{12}(-\xi)|.$$
But from the Plancherel identity we have
$$ \sum_{\xi \in H} |\hat f(\xi)|^2, \sum_{\xi \in H} |\hat g(\xi)|^2 \leq 1,$$
and the claim now follows from the Cauchy-Schwarz or H\"older inequalities.

Now we show \eqref{fo}.  By symmetry we may take $i=1$.  We make the change of variables $x_i = x_2 + h_i$ for all $i=3,\dots,k$.  Then by the triangle inequality, it suffices to show that
$$
\frac{1}{|H|^2} \left|\sum_{x_1,x_2 \in H} \left(\prod_{i=1}^k f_i(x_i)\right) \left(\prod_{1 \leq i < j \leq k} f_{ij}(x_i+x_j)\right)\right| \leq \sup_{\xi \in \hat H} |\hat f_{1}(\xi)|
$$
for any choice of $h_3,\dots,h_k \in H$, with the understanding that $x_i = x_2+h_i$ for $i=3,\dots,k$.  But once one fixes the $h_1,\dots,h_k$, the left-hand side can be written as
$$
\frac{1}{|H|^2} \left|\sum_{x_1,x_2 \in H} f_1(x_1) g(x_2) h(x_1+x_2)\right|$$
for some functions $g,h \colon H \to [-1,1]$ whose exact form is not important to us.  Using the Fourier inversion formula as before, we obtain the claim.
\end{proof}

Let $\sigma := |A \cap H|/|H|$ denote the density of $A$ in $H$, thus from \eqref{ah} we have
\begin{equation}\label{sis}
 \sigma, 1-\sigma \gg_{k,\eps} 1.
\end{equation}
We split $1_{A \cap H} = \sigma + (1_{A \cap H}-\sigma)$ and $1_{H \backslash A} = 1-\sigma + (1_{H \backslash A}-(1-\sigma))$, and observe from \eqref{fo}, \eqref{fo-2} that the Fourier coefficients of $1_{A \cap H}-\sigma$ and $1_{H \backslash A}-(1-\sigma)$ are bounded in magnitude by $\eps_1^{1/4}$.  Applying Proposition \ref{one} many times, we conclude that
\begin{equation}\label{sos}
  \Lambda(1_{A \cap H},\dots,1_{A \cap H}, (1_{H \backslash A})_{1 \leq i < j \leq k}) = \sigma^k (1-\sigma)^{\binom{k}{2}} + O_k(\eps_1^{1/4}).
	\end{equation}
On the other hand, since $\phi(A \cap H) < k$, the only contribution to the left-hand side comes from when two of the $x_i,x_j$ are equal, which gives the upper bound
\begin{equation}\label{sus}
  \Lambda(1_{A \cap H},\dots,1_{A \cap H}, (1_{H \backslash A})_{1 \leq i < j \leq k}) \leq \binom{k}{2} |H|^{-1} \ll_k C_0^{-1/2}.
	\end{equation}
The estimates \eqref{sis}, \eqref{sos}, \eqref{sus} lead to the desired contradiction by choosing $\eps_1$ small and $C_0$ large. This proves Theorem \ref{odd}.



\begin{dajauthors}
\begin{authorinfo}[tt]
  Terence Tao\\
  Department of Mathematics, UCLA\\
  405 Hilgard Ave\\
	Los Angeles, CA 90095, USA\\
  tao\imageat{}math.ucla.edu\\
	\url{https://www.math.ucla.edu/~tao}
\end{authorinfo}

\begin{authorinfo}[vv]
  Van Vu\\
	Department of Mathematics, Yale University\\
	New Haven, CT 06520, USA \\
	van.vu\imageat{}yale.edu\\
	\url{http://campuspress.yale.edu/vanvu/cv/}
\end{authorinfo}

\end{dajauthors}

\end{document}